\documentclass[12pt]{amsart}
\usepackage[a4paper]{geometry}
\usepackage{xcolor}
\usepackage[all]{xy}
\usepackage{hyperref}
\usepackage{array}   
    \newcolumntype{L}{>{$}l<{$}} 
\usepackage{amssymb}

\newtheorem{theorem}{Theorem}[section]
\newtheorem{proposition}[theorem]{Proposition}

\newtheorem{conjecture}[theorem]{Conjecture}

\newtheorem{lemma}[theorem]{Lemma}
\theoremstyle{definition}

\newtheorem{problem}[theorem]{Problem}
\theoremstyle{remark}

\newcommand{\ZZ}{\mathbb{Z}}
\newcommand{\QQ}{\mathbb{Q}}

\newcommand{\OO}{\mathcal{O}}
\DeclareMathOperator{\disc}{\mathrm{disc}}

\DeclareMathOperator{\Res}{\mathrm{Res}}

\newcommand{\bfs}{\mathbf{s}}
\newcommand{\Prob}{\textnormal{Prob}}

\DeclareMathOperator{\sign}{sign}

\DeclareMathOperator{\Imp}{Imp}

\DeclareMathOperator{\rank}{rank}

\title{Probabilistic Galois Theory -- The Square Discriminant Case}
\author{Lior Bary-Soroker}
\address{School of Mathematical Sciences, Tel Aviv University, Tel Aviv 69978, Israel}
\email{barylior@tauex.tau.ac.il}
\author{Or Ben-Porath}
\address{School of Mathematical Sciences, Tel Aviv University, Tel Aviv 69978, Israel}
\email{orbenporath@mail.tau.ac.il}

\author{Vlad Matei}
\address{School of Mathematical Sciences, Tel Aviv University, Tel Aviv 69978, Israel  \bigskip \indent {Institute of Mathematics of the Romanian Academy\\  Bucharest\\Romania}}
\email{vladmatei@mail.tau.ac.il}

\date{\today}

\begin{document}

\begin{abstract}
    The paper studies the probability for a Galois group of a random polynomial to be $A_n$. We focus on the so-called large box model, where we choose the coefficients of the polynomial independently and uniformly from $\{-L,\ldots, L\}$. 
    The state-of-the-art upper bound is $O(L^{-1})$, due to Bhargava. We conjecture a much stronger upper bound  $L^{-n/2 +\epsilon}$, and that this bound is essentially sharp. 
    We prove strong lower bounds both on this probability and on the related probability of the discriminant being a square. 
\end{abstract}

\maketitle

\section{Introduction}
In its most naive sense, Probabilistic Galois Theory is an area of mathematics that studies the arithmetic properties of a random polynomial $f$ with integral coefficients.  One basic arithmetic property is the irreducibility of $f$ in the ring $\QQ[X]$. More generally, every polynomial $f$ comes with a Galois group, which we denote by $G_f$. It acts naturally on the roots of $f$, hence we may view it as a subgroup of $S_{\deg f}$, which is well-defined when $f$ is separable (up to conjugation). The Galois group encodes the arithmetic properties of $f$, e.g., $f$ is irreducible if and only if $G_f$ is transitive. 

In this paper, we study the so-called \emph{large box model}, in which we fix the degree $n=\deg f$ of $f$ and we choose  the  coefficients   uniformly and independently from $[-L,L]\cap \ZZ$, and we are interested in the probabilities as $L$ goes to $\infty$. See \cite{bary2020irreducibility,bary2020irreducible,breuillard2019irreducibility} for recent progress on the \emph{restricted coefficient model} in  which  $L$ is fixed and $n\to \infty$ and \cite{eberhard2022characteristic,ferber2022random} for recent progress in the \emph{random matrix model} (where $f$ is the characteristic polynomial of a random matrix).

The general principle is that Galois groups tend to be the largest possible. Van-der Waerden \cite{van1936seltenheit} proved that $S_n$ is the most probable Galois group:
\[
    \lim_{L\to \infty} \Prob(G_f = S_n) =1.    
\]
In the same paper van-der Waerden conjectured that the second most probable group is $S_{n-1}$: 
\[
     \Prob(G_f = S_{n-1} ) \sim \Prob(G_f\neq S_n), \quad L\to \infty.
\]
Since $G_f \leq  S_{n-1}$ if and only if $f$ has a root $\alpha$ in $\QQ$, one may, using the van der Waerden theorem for $f(X)/(X-\alpha)$,  compute the LHS:
\[
    \Prob(G_f = S_{n-1} ) \sim \Prob(f \mbox{ has a root in } \mathbb{Q}) \sim \frac{c_n}{L} , \quad L\to \infty, \quad n>2
\]
for an explicit constant $c_n>0$, see
\cite{chela1963reducible,kuba2009distribution}. As stated, the conjecture is only known for $n=1,2$ (trivially) and $n=3,4$  \cite{Chow2020EnumerativeQuartics}. For general $n$, there is a line of results towards the conjecture in terms of bounding $\Prob(G_f\neq S_n)$: 
In 1936, van der Waerden \cite{van1936seltenheit} gave the aforementioned result. In 1956, Knobloch \cite{Knobloch} obtained the first power saving. In 1973, Gallagher \cite{gallagher1973large} used the large sieve to show that $\Prob(G_f\neq S_n)=O(L^{-1/2+\epsilon})$. In 2010, Zywina \cite{zywina2010hilbert} applied  the larger sieve to remove the $\epsilon$. In 2013, Dietmann \cite{dietmann2013probabilistic} introduced a different approach, reducing the problem to counting integral points on varieties, and improved the bound to $O(L^{\sqrt{2}-2+\epsilon})$. In 2021, Bhargava \cite{bhargava2021galois} made a breakthrough by establishing  a weak version of the van der Waerden conjecture: $\Prob(G_f\neq S_n) = O(L^{-1})$. See also the two preceding results \cite{anderson2021quantitative, chow2021towards} in the  same year. 
In all of the above, the most challenging part is to bound the probability that $G_f=A_n$. More precisely, Bhargava proved 
\begin{align}
    \nonumber &\Prob(G_f \neq  S_{n}) = \Prob(G_f=S_{n-1}) + \Prob(G_f=A_n) +O(L^{-2}), \qquad \mbox{and} \\
    &\Prob(G_f=A_{n})=O(L^{-1}), \label{Bhar_A_n}
\end{align}
for $n\geq 10$ (for $n<10$, one needs to modify the error term in the first equation).

Even though the bound \eqref{Bhar_A_n} is a huge breakthrough, we do not believe that it is sharp. For small $n$'s the situtation is rather clear: 
If $n=1$, then $\Prob(G_f = A_1)=1$. If $n=2$, then $\Prob(G_f = A_2)=\Prob(f \mbox{ reducible})$ which equals $L^{-1}\log L$ up to a constant. The situation is less trivial when $n=3$, in which case, Xiao \cite{xiao2022monic} proved that $\Prob(G_f = A_3) \ll L (\log L)^2$. 
The goal of this paper is to suggest the following conjecture on the asymptotics of $\Prob(G_f=A_{n})$ for large $n$: 
\begin{conjecture}\label{conj}
    Let $n\geq 4$. Then
    \[
        L^{-n/2+\epsilon} \gg \Prob(G_f = A_n ) \gg L^{-n/2},
    \]
    as $L\to \infty$. 
\end{conjecture}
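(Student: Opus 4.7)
The plan is to address the two inequalities of the conjecture separately; the underlying observation in both cases is that $G_f\subseteq A_n$ precisely when $\disc(f)$ is a perfect integer square, so that the problem reduces to counting integer points on the discriminant hypersurface $\{\disc(f)=y^2\}$ in $\AA^{n+2}$, with the additional requirement that the corresponding $f$ be irreducible with Galois group equal to the \emph{full} $A_n$.

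For the lower bound $\Prob(G_f = A_n) \gg L^{-n/2}$, the plan is to exhibit an explicit algebraic subvariety $W \subseteq \AA^{n+1}$ of the coefficient space of dimension $\lceil n/2\rceil$, along which $\disc$ restricts to an identical polynomial square and whose generic member has Galois group $A_n$. Two candidate constructions are (a) families of the form $f(x) = p(x)^2 - q(x)$ with $\deg p = \lfloor n/2\rfloor$ and $q$ suitably constrained, whose discriminant factors through $\disc(p)$ in a controlled way, or (b) parametrizations by primitive elements $\alpha$ of number fields of degree $n$ whose Galois closure is $A_n$, combined with a Malle-type count of such fields. Once such a $W$ is in hand, standard lattice-point estimates yield $\gg L^{n/2}$ integer specializations with coefficients in $[-L,L]^{n+1}$, and a quantitative form of Hilbert irreducibility (for instance via Zywina's larger sieve) removes those specializations whose Galois group is strictly smaller than $A_n$, with acceptable loss.

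The upper bound $\Prob(G_f = A_n) \ll L^{-n/2+\epsilon}$ is the truly difficult direction, and what I expect to be the principal obstacle. Concretely, one would need to show that the set of $a \in [-L,L]^{n+1}$ with $\disc(f_a)$ a perfect square has size $O(L^{n/2+\epsilon})$ --- an enormous saving over the naive dimension bound $O(L^{n+1})$. The locus $\{\disc = y^2\}$ contains many large trivial subfamilies (most notably the reducible ones: for $f = (x-r)g(x)$ one has $\disc(f) = g(r)^2\disc(g)$, a square whenever $\disc(g)$ is), and these must be peeled off and shown to account for reducible polynomials only. Controlling integer points on the residual, irreducible locus would require a determinant/sieve method combined with strong arithmetic-geometric input; in effect the upper bound asserts that the subvariety $W$ used in the lower-bound step is essentially the only significant source of square-discriminant integer polynomials in the box, a uniqueness statement well beyond the reach of current methods.
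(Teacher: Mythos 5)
This statement is a \emph{conjecture} --- the paper does not prove it, and neither does your proposal. What the paper actually establishes is a pair of weaker results that serve as evidence: Theorem~\ref{thm:An} gives the lower bound $\Prob(G_f=A_n)\gg L^{k-n}$ with $k=\tfrac12\lfloor\tfrac{n+1}{2}\rfloor$, which is roughly $L^{-3n/4}$ and hence falls a factor of about $L^{n/4}$ short of the conjectured $L^{-n/2}$; and Theorem~\ref{thm:C_2WrS_n} gives $\Prob(\disc f=\square)\gg L^{-(n+1)/2}$, which nearly matches the conjectured exponent but for the \emph{wrong} Galois group ($(C_2\wr S_{n/2})\cap A_n$, not $A_n$). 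The bridge between the two --- Problem~\ref{prob:1} --- is explicitly flagged as open.

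Your lower-bound plan has the right shape but does not close this gap. You want a variety $W$ of dimension $\lceil n/2\rceil$ on which $\disc$ is an identical square \emph{and} whose generic member has Galois group the full $A_n$ \emph{and} which admits a height-preserving parametrization so that $L$-bounded parameters produce $L$-bounded coefficients. Your candidate (a), $f=p^2-q$, is essentially the Hilbert/LLT construction that the paper implements in Section~\ref{sec:proof12}; there the $\approx n/2$ parameters must be restricted to $|a_i|,|t|\le L^{1/2}$ precisely because $f$ depends quadratically on them, which halves the effective exponent and yields only $L^{-3n/4}$, not $L^{-n/2}$. You have not shown a way around this height loss. Your candidate (b) --- parametrizing by $A_n$-fields and invoking a Malle-type count --- is not an independent route; current lower bounds for $A_n$-fields of bounded discriminant (including the best ones, in the cited LLT paper) come from exactly this kind of polynomial family and suffer the same deficit, so this would be circular. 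The paper's sharper construction $f=g(X^2)$ does reach the conjectured order of magnitude for $\Prob(\disc f=\square)$, but you correctly need $G_f=A_n$ and there $G_f$ is the Coxeter group of type $D_{n/2}$, strictly smaller than $A_n$. A minor additional slip: you work in $\AA^{n+1}$, but the model is monic, so the coefficient space is $\AA^n$; with that normalization a $d$-dimensional height-preserving family gives $\Prob\gg L^{d-n}$, so you do need $d\geq n/2$, which neither of your candidates delivers.

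On the upper bound you correctly identify it as far beyond reach, so there is no further complaint there, except to emphasize that since the statement is a conjecture, the correct posture is not to outline a proof but to recognize that the paper itself only supplies partial evidence (the $O(L^{-1})$ bound of Bhargava, and the two theorems above) and leaves both directions open.
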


The remainder of the introduction is devoted to a discussion of the evidence for this conjecture. In summary, we establish a new lower bound for $\Prob(G_f=A_n)$ (Theorem~\ref{thm:An}). For $\Prob(\disc f = \square\neq 0)$ we obtain a much sharper lower bound (Theorem~\ref{thm:C_2WrS_n})). Heuristically, the latter suggests a sharp lower bound for $\Prob(G_f=A_n)$, see Problem~\ref{prob:1}.

For a separable polynomial $f$, we have $G_f\leq A_n$ if and only if $\disc f=\square$; so $\Prob(G_f = A_n)\leq \Prob(\disc f = \square\neq 0)$. 

A very naive heuristic argument suggests the size of the latter: By \cite{mahler1963two}, $\disc f$ is typically of size $L^{2n-2}$, so if we assume that $\disc f$ is a random number of this size,  $\Prob(\disc f = \square\neq 0) \leq  L^{-(n-1)}$.
This heuristic is too naive and gives a wrong answer. The reason is that there are many symmetries. We give two families of polynomials with square discriminant featuring  different types of symmetries. 

The first type of symmetries, stems from the following simple example: if $f+f'=g^2$ and $n\equiv 0\mod 4$, then
\[
    \disc(f)= \Res(f,f')=\Res(f,f+f')=\Res(f,g^2)=\Res(f,g)^2.
\]
It turns out that almost surely, in a precise quantitative sense, $G_f=A_n$. This idea traces back at least to Hilbert \cite{Hilbert}, who constructed polynomials with $G_f=A_n$ by making $f'$ close to a square, cf.\ \cite{LLT}. We make this count precise and prove:

\begin{theorem}\label{thm:An}
    Let $n\geq 4$ and put $k=\frac{1}{2}\left\lfloor \frac{n+1}{2}\right\rfloor$. Then 
    $\Prob(G_f = A_n) \gg L^{k-n} $.
\end{theorem}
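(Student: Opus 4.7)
I would exhibit an explicit family of $\gg L^{k+1}$ integer polynomials $f$ of degree $n$ with coefficients in $[-L,L]$ and $G_f=A_n$, which immediately gives $\Prob(G_f=A_n)\gg L^{k-n}$ since the sample space has size $\asymp L^{n+1}$.

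The starting point is the algebraic identity highlighted just before the theorem: if $f+f'=g^2$ for some $g\in\mathbb{Z}[X]$, then $\Res(f,f+f')=a_n\Res(f,f')$ combined with $\Res(f,g^2)=\Res(f,g)^2$ yields $\disc(f)=(-1)^{n(n-1)/2}\Res(f,g)^2/a_n^2$, a perfect rational square whenever $n\equiv 0\pmod 4$. For the other residue classes of $n$ modulo $4$ I would use a variant: $hf+f'=g^2$ with $h=(X+c)^2$ (so that $\Res(f,h)=f(-c)^2$ is automatically a square), or, for odd $n$, factor $f=(X-r)\tilde f$ with $\deg\tilde f=n-1$ and $\disc(\tilde f)=\square$, using $\disc(f)=\disc(\tilde f)\,\tilde f(r)^2$.

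Parameterizing the family by the coefficients of $g$ together with any auxiliary scalars ($c$, $r$, etc.), the map to the $n+1$ coefficients of $f$ is a polynomial map quadratic in the parameters. A volume argument in the parameter space---with the leading coefficient of $g$ bounded by $L^{1/2}$ (since $a_n$ is its square) and the remaining coefficients balanced so the coefficients of $f$ stay in $[-L,L]$---gives a count $\gg L^{k+1}$. Since each such $f$ has $\disc(f)=\square\neq 0$, its Galois group automatically satisfies $G_f\subseteq A_n$. To obtain $G_f=A_n$ for most family members I would invoke Hilbert's irreducibility theorem: view the generic polynomial in the family as an element of $K[X]$ where $K$ is the function field of the parameter space, verify that its Galois group over $K$ is exactly $A_n$ by exhibiting a specialization whose Galois group is already $A_n$ (e.g., via cycle types read off from mod-$p$ reductions), and conclude that the integer parameter tuples with $G_f\subsetneq A_n$ form a thin set of size $o(L^{k+1})$.

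The main obstacle I anticipate is this last step: verifying that the generic Galois group is exactly $A_n$ and not some proper transitive subgroup of $A_n$---for instance, an imprimitive subgroup stabilizing a partition of the roots into blocks, which could in principle be induced by the squaring structure of $f+f'=g^2$. Ruling such subgroups out requires both a careful choice of the construction in the identity step and concrete specializations certifying generators of $A_n$; this is the technical heart of the argument.
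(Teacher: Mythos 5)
Your plan follows the same strategy as the paper: construct a parameterized family with square discriminant by forcing $f'$ (or a linear combination of $f$ and $f'$) close to a square, count integer specializations of bounded height, then apply Hilbert irreducibility. The $f+f'=g^2$ identity you start from is even quoted in the paper's introduction as motivation. But the paper's actual family is different and chosen deliberately. For $n$ even, $f$ is cut out by $f' = nXh^2$ together with $(-1)^{n/2}f(0) = t^2$; for $n$ odd, by $f - Xf' = -(n-1)(X-1)h^2$ together with $(-1)^{(n-1)/2}f'(1) = t^2$, where $h$ is monic of degree $r=\lfloor(n-1)/2\rfloor$ with free coefficients $a_1,\dots,a_r$ and $t$ is one additional free variable. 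These are precisely the families of Landesman--Lemke-Oliver--Thorne \cite{LLT}; their Lemmas 3.1/3.6 give $\disc f = \square$, and their Lemmas 3.2/3.7 exhibit integer specializations with Galois group $A_n$, from which the generic Galois group over $\QQ(a_1,\ldots,a_r,t)$ is forced to equal $A_n$. This settles, by citation, the step you correctly flag as the ``technical heart'': your proposal identifies what must be done (produce a specialization with full $A_n$) but leaves it undone, and that is where the gap lies.

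Two further issues. Your odd-$n$ variant $f=(X-r)\tilde f$ makes $f$ reducible over $\QQ$, so $G_f$ is intransitive and cannot equal $A_n$; the paper instead uses the Wronskian-type identity above for odd $n$. And the count needs rechecking: in the $f+f'=g^2$ family, $f$ is determined by the $n/2+1$ coefficients of $g$, each forced to be $\ll L^{1/2}$ by $H(f)\le L$, giving $\asymp L^{(n/2+1)/2}=L^{k+1/2}$ polynomials (not $L^{k+1}$); against a non-monic sample space $\asymp L^{n+1}$ this yields only $\Prob\gg L^{k-n-1/2}$. The paper works in the monic model (sample space $\asymp L^n$): its $r+1$ parameters, each bounded by $L^{1/2}$, satisfy $H(f)\ll H(h)^2+t^2\ll L$ and give $\gg L^{(r+1)/2}=L^k$ specializations, whence $\Prob\gg L^{k-n}$.
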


The second type of  symmetries comes from decomposable polynomials. The simplest family, which already gives the best lower bounds to date up to a constant, is the family of polynomials  
$f(X)=g(X^2)$, with $g(0)\neq 0$ and $g$ separable.  The discriminant is a square if and only if $g(0)= \square$. This gives a linear condition in terms of the coefficients, but we have half as many coefficients. This gives the lower bound $\Prob(\disc(f)=\square) \gg L^{-\frac{n+1 }{2}}$.
More generally, one may take  compositions with a quadratic rational function, this improves only the implied constant, and not the order of magnitude. 

We stress that in this family, $G_f\neq A_{n}$. In fact, almost always $G_f$ is the Coxeter group of type $D_{n/2}$. Indeed, the roots of $f$ come in pairs $\{\lambda,-\lambda\}$, and the Galois action acts on the set of pairs. So $G_f= (C_2\wr S_{n/2})\cap A_n\leq C_2\wr S_{n/2}$, where $C_2\wr S_{n/2}$ is the Coxeter group of type $B_{n/2}$, also known as, the permutational wreath product endowed with the imprimitive action (for details, see Section~\ref{sec:lowerbound}).  

The imprimitivity allows one to give strong upper bounds on polynomials with this Galois group: Widmer \cite{widmer2011number} proves an upper bound  of the form $\ll L^{-\frac{n}{2}+1}$ (in fact, Widmer considers a non-monic model, but his arguments carry-over to the monic case). We improve the upper bound of Widmer by a factor of $L^{-\frac2n}$. In summary, we prove the following:

\begin{theorem}\label{thm:C_2WrS_n}
    Let $n\geq 6$ be even. Then
    \[
        L^{-\frac{n}{2}-\frac{2}{n}+1+\epsilon }\gg\Prob(G_f = (C_2\wr S_{n/2})\cap A_n) \gg L^{-\frac{n+1 }{2}}.
    \]
    In particular, $\Prob(\disc f=\square)\gg L^{-\frac{n+1}{2}}$.
\end{theorem}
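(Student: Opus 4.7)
\emph{Lower bound.} The plan is to exhibit a rich family of polynomials of the desired shape and use Hilbert-style arguments to verify the Galois group. Consider $f(X)=g(X^2)$ with $g\in\ZZ[Y]$ monic of degree $m=n/2$, coefficients in $[-L,L]$, and $g(0)=(-1)^m s^2$ for some nonzero integer $s$ with $|s|\le \sqrt L$. A direct resultant computation gives
\[
    \disc(g(X^2))=4^m (-1)^m g(0)\disc(g)^2,
\]
which is a nonzero square exactly under this constraint (assuming $g$ separable). Since the roots of $f$ are $\pm\sqrt{\beta_i}$ for $\beta_i$ the roots of $g$, we automatically have $G_f\subseteq (C_2\wr S_m)\cap A_n$. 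The $m-1$ free coefficients of $g$ contribute $\asymp L^{m-1}$ choices while the constant term contributes $\asymp\sqrt L$, for a total of $\asymp L^{(n-1)/2}$ polynomials and hence probability $\asymp L^{-(n+1)/2}$.

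To promote the inclusion to equality $G_f=(C_2\wr S_m)\cap A_n$ for a positive proportion, I would invoke a quantitative van der Waerden (adapted to the constraint $g(0)=(-1)^m s^2$) to secure $G_g=S_m$ for most such $g$, then apply Kummer theory: the kernel of $G_f\to G_g=S_m$ is dual to $V:=\langle[\beta_1],\dots,[\beta_m]\rangle\subseteq K^\times/K^{\times 2}$, where $K$ is the splitting field of $g$. The relations among the $[\beta_i]$ form an $S_m$-invariant subspace of $\FF_2^m$; for $m\ge 3$ these are exactly $\{0\}$, $\{0,\mathbf{1}\}$, the sum-zero subspace, and $\FF_2^m$. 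The forced relation $\prod\beta_i=s^2$ excludes $\{0\}$, while the remaining two non-trivial options impose strong square-dependences among the $\beta_i$ (roughly, all $\beta_i$ being squares or all pairwise ratios being squares) that are ruled out for most $g$ by a Hilbert-irreducibility argument on the generic family with $a_0=(-1)^m T^2$.

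\emph{Upper bound.} I would follow Widmer's imprimitive-block strategy, sharpened. If $G_f=(C_2\wr S_m)\cap A_n$, the Galois-stable partition of the roots into $m$ pairs forces a decomposition $f(X)=P(h(X))$ for some monic $P\in\ZZ[Y]$ of degree $m$ and monic quadratic $h\in\QQ[X]$; after an integer translation of $X$, $h$ may be taken in a canonical form (e.g.\ $h(X)=X^2+c$ with $c\in\tfrac14\ZZ$). Widmer's count of such pairs $(P,h)$ with $P\circ h$ having coefficients in $[-L,L]$ gives $\ll L^{m+1}$, yielding probability $\ll L^{-n/2+1}$. The additional factor $L^{-2/n}=L^{-1/m}$ is extracted by a careful lattice-point analysis on the decomposable variety, exploiting the triangular coefficient map $(c,q_0,\dots,q_{m-1})\mapsto(f_0,f_2,\dots,f_{2m-2})$ to save a factor of $L^{1/m}$ in the parameter count, combined with a standard sieve treatment of the negligible exceptional event $G_P\ne S_m$ (absorbed in the $\epsilon$).

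\emph{Main obstacle.} The technically hardest step is extracting the precise $L^{-1/m}$ saving in the upper bound, which requires a delicate balance between the dyadic ranges of the parameters $(c,q_j)$ and the induced constraints on the even coefficients of $f$ — the parallelepipeds on which the lattice-point count is performed have aspect ratio depending strongly on $c$, so the dyadic decomposition must be handled with care not to lose the gain when summing over all admissible scales. On the lower-bound side, the non-routine step is the quantitative Hilbert irreducibility argument for the constrained family $\{g:g(0)=(-1)^m s^2\}$, necessitating a sieve adapted to the square restriction on the constant term.
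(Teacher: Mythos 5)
Your lower bound uses the same family $f(X)=g(X^2)$ with $g(0)=(-1)^m s^2$ as the paper, and the resulting count $\asymp L^{(n-1)/2}$ is correct; the difference is in how you certify the Galois group. You sketch a Kummer-theoretic route (classify the $S_m$-invariant subspaces of $\mathbb{F}_2^m$, then rule out the degenerate cases), but you ultimately fall back on ``a Hilbert-irreducibility argument on the generic family with $a_0=(-1)^m T^2$'' — which is essentially what the paper does directly and more economically. The paper's Proposition~\ref{prop:wreath_product:alternating_geometric_realization} computes the Galois group of $f_1(X^2)$ over $\QQ(B_0,\ldots,B_{m-1})$, with $f_1 = X^m+B_{m-1}X^{m-1}+\cdots+B_1X+(-1)^mB_0^2$, to be exactly $(C_2\wr S_m)\cap A_n$ (as the $D_m$ Coxeter group, via the explicit imprimitive wreath-product action and the observation that $\sqrt{(-1)^mC_0}$ cuts out the alternating subgroup), and then invokes Hilbert irreducibility once over the box $|b_i|\le L$, $|b_0|\le\sqrt L$. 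Your Kummer-module bookkeeping, and in particular a quantitative van der Waerden theorem restricted to the subvariety $g(0)=(-1)^m s^2$, is avoidable overhead.

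Your upper bound has a genuine gap. You assert that if $G_f=(C_2\wr S_m)\cap A_n$ then, after a rational translation, $f(X)=P(X^2+c)$ for some $P\in\ZZ[Y]$. This is false. Imprimitivity only gives you a degree-$m$ subfield $\QQ\subset E\subset\QQ(\alpha)$; the minimal polynomial of $\alpha$ over $E$ is $X^2+e_1X+e_0$ with $e_1,e_0\in E$, and the quantity $-e_1=\alpha+\alpha'$ is the block trace, which lies in $E$ and is generically \emph{not} rational. If one tries $h(X)=X^2+pX+q$ with $p,q\in\QQ$, then $h(\alpha)=(p-e_1)\alpha+(q-e_0)$ lies in $E$ iff $e_1=p\in\QQ$. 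Equivalently, the $m$ block-traces $\gamma_i$ are a full Galois orbit in $E$, and they coincide (hence lie in $\QQ$) only on a proper subvariety of the parameter space. So the generic $f$ counted by the theorem is not of the form $P\circ h$ with $h\in\QQ[X]$ quadratic, and your lattice-point count on the decomposable variety would miss almost all of them. The paper avoids this trap by working only with the factorization $f=\prod_{\sigma\in{\rm Emb}(E,\mathbb{C})}\sigma g$ (equation~\eqref{imprimitive_product}); it parametrizes by pairs $(E,g)$ rather than a polynomial composition, bounds the heights of the coefficients $b_i$ of $g$ via Lemma~\ref{lem_coefficient_height}, and then performs the count by summing $N_E^{n_2}$ where $N_E=|S_E(1;\Upsilon)|$. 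The $L^{-2/n}$ improvement over Widmer comes not from a finer lattice analysis on a composition locus, but from stratifying the sum over $E$ by $|\Delta_E|$ and $N_E$ and balancing three regimes (small discriminant via Schmidt's bound, small $N_E$ by the union bound, and the remaining range via the covering lemma~\ref{lem_covers_bound} together with Widmer's asymptotic~\ref{lem_barroero_bounded_integers_in_given_field}); see Lemma~\ref{lem:mathcalF_bounds} and Proposition~\ref{prop:Primitive}.
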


When $n=4$ one has $(C_2\wr S_{4/2})\cap A_4=V_4$, and  Chow and Dietmann \cite{Chow2020EnumerativeQuartics} obtained the better upper bound $L^{-2+\epsilon} \gg \Prob(G_f = V_4)$. Their lower bound is similar to ours. 

From the naive general principle that Galois groups should be as large as possible, one may believe that the answer to the following problem is positive:
\begin{problem}\label{prob:1}
    As $L\to \infty$, is $\Prob(\disc f =\square) \sim \Prob( G_f = A_n)$?
\end{problem}

So Theorem~\ref{thm:C_2WrS_n} may be considered as an evidence toward the lower bound of Conjecture~\ref{conj} for even $n$-s. 
We do not have good evidence for the upper bound.

We formulate a speculative open problem:
\begin{problem}
    Let $G_1\leq G_2\leq S_n$. Then, there exists a constant $C>0$ such that 
    \[
        \Prob(G_f=G_1)\leq C\cdot  \Prob(G_f = G_2),
    \]
    for all $L\geq 1$.
\end{problem}
A negative answer to this problem  will be a big surprise. On the other hand, an affirmative answer in full generality will imply the inverse Galois problem (indeed take $G_1=1$).

As discussed above, the upper bound in Theorem~\ref{thm:C_2WrS_n} relates to Widmer's work on imprimitive Galois groups. In fact, Theorem~\ref{thm:C_2WrS_n} follows from an improvement of Widmer's general result:  Assume  $n=n_1 n_2$, $n_1,n_2>1$ and let $G$ be a subgroup of $S_n$. We say that $G$ is \emph{transitive $(n_1,n_2)$-imprimitive} if $G$ is transitive and there is a partition  of $n$ into $n_1$ blocks $\Lambda_i$ such that for all $g\in G$ and $1\leq i\leq n$ there exists $1\leq j\leq n$ such that  $g\Lambda _i = \Lambda_j$. In particular, $|\Lambda_i|=n_2$ for all $i$. For example, the wreath product $C_2\wr S_{n/2}$ with the imprimitive action is a transitive $(2,n/2)$-imprimitive. We prove
\begin{theorem}\label{thm:imprimitive}
    Let $n=n_1 n_2$ with $n_1,n_2>1$. Then
    \[
        \Prob(G_f \mbox{ is transitive $(n_1,n_2)$-imprimitive}) \ll L^{-n+m+\epsilon},
    \]
    where $m = \frac{n}{2}-\frac{2}{n}+1$ if $n_2=2$ and $n\geq 10$, and
    \[
        m = \max\bigg\{n_2+\frac{1}{2}+\frac{1}{n_1},n_1+(1-\frac{1}{n_1})(n_2-1),n_1+\frac{n_2}{2}\bigg\},
    \]
    otherwise.
\end{theorem}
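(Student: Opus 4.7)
The plan is to reduce the problem to counting integer polynomials admitting a norm-form decomposition. If $G_f$ is transitive $(n_1,n_2)$-imprimitive with block system $\{B_1,\ldots,B_{n_1}\}$, fixing the block $B_1$ determines a subgroup of $G_f$ of index $n_1$ whose fixed field $K$ is a degree $n_1$ number field. Setting $h(X)=\prod_{\alpha\in B_1}(X-\alpha)\in K[X]$ yields the norm-form factorization
\[
    f(X) = N_{K/\QQ}(h(X)),
\]
with $h$ monic of degree $n_2$ and integral over $\ZZ$, so $h\in O_K[X]$. Conversely, any $f$ admitting such a representation has Galois group inside $S_{n_2}\wr S_{n_1}$ with the imprimitive action, hence is transitive $(n_1,n_2)$-imprimitive (possibly for a coarser block system). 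It therefore suffices to bound from above the number of integer polynomials $f$ of degree $n$ and height $\leq L$ admitting some such representation for some $K$ of degree $n_1$.

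The next step is to stratify by the field $K$ and count lattice points. For fixed $K$, the monic degree $n_2$ polynomials in $O_K[X]$ form a lattice $\Lambda_K$ of rank $n$ and covolume $|\disc K|^{n_2/2}$ inside $(K\otimes_{\QQ}\mathbb{R})^{n_2}$. The constraint $\|N_{K/\QQ}(h)\|_\infty\leq L$ cuts out a bounded semi-algebraic region $\Omega_K(L)$: since each coefficient of $N_{K/\QQ}(h)$ is homogeneous of degree $n_1$ in the coefficients of $h$ across all embeddings $K\hookrightarrow\mathbb{C}$, every embedded $h^{(\sigma)}$ must have coefficients of size $\ll L^{1/n_1}$. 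A Davenport-type geometry-of-numbers estimate then gives
\[
    \#(\Lambda_K\cap\Omega_K(L)) \ll \frac{L^{n_2}}{|\disc K|^{n_2/2}} + \text{(successive-minima boundary terms)}.
\]
Summing over $K$ using Schmidt's bound on the count of degree $n_1$ fields with bounded discriminant, together with the a priori estimate $|\disc K|\ll L^{O(1)}$ coming from a resolvent of $f$, and absorbing the reducible $h$ contribution into the analogous count in lower degree, one balances three regimes (small-discriminant $K$, near-boundary lattice points, and generic volume) to produce the three-term maximum in the theorem.

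For the sharper bound in the case $n_2=2$ and $n\geq 10$, I would exploit the special form $h(X)=X^2+aX+b$ with $a,b\in O_K$. The region $\Omega_K(L)$ is anisotropic: the coefficients of $N_{K/\QQ}(h)$ are symmetric functions in the pairs $(\sigma(a),\sigma(b))$, so their joint constraint is genuinely tighter than the naive cube $[{-}L^{1/n_1},L^{1/n_1}]^{2n_1}$. A refined successive-minima analysis, combined with a sharper treatment of lattice points close to the boundary of $\Omega_K(L)$, extracts an additional factor of $L^{2/n}$ over Widmer's bound. The principal obstacle is precisely this last step: the trivial boundary term in a generic geometry-of-numbers estimate is already of the order of magnitude one hopes to save, so extracting the genuine $L^{2/n}$ gain requires a uniform-in-$K$ quantification of the anisotropy of $\Omega_K(L)$ together with an effective bound on the thin set of $(a,b)\in O_K^2$ for which $N_{K/\QQ}(h)$ falls near the boundary.
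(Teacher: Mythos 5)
Your starting point---the factorization $f=N_{K/\QQ}(h)$ and the observation that every embedded coefficient of $h$ has size $\ll L^{1/n_1}$---matches the paper's Lemma~\ref{lem_pairs} and Lemma~\ref{lem_coefficient_height}. The genuine gap is in the sentence ``summing over $K$ \ldots\ one balances three regimes,'' which is precisely where the paper's novelty lives and where your proposal has no mechanism. For a fixed $K$ your per-field estimate (whether via Davenport or via $N_K^{n_2}$ with $N_K=|S_K(1;\Upsilon)|$ bounded by Widmer) is fine, but the naive union bound over all degree-$n_1$ fields that could occur, using Schmidt's count of fields of bounded discriminant together with the a~priori bound $|\Delta_K|\ll L^{O(1)}$, does not produce the exponent $m$: the per-field ``boundary'' term $\Upsilon^{n_1-1}\asymp L^{1-1/n_1}$ contributes $L^{(1-1/n_1)n_2}$ per field, and multiplying by even a crude field count ruins the bound. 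The paper's way around this is the combinatorial cover lemma (Lemma~\ref{lem_covers_bound}): the sets $S_E(1;\Upsilon)$ for the ``hard'' fields $E\in\mathcal{F}_3$ (large discriminant \emph{and} large $N_E$) cover a subset of $S_{\QQ}(n_1;\Upsilon)$ of total size $\ll L^{n_1}$ with small pairwise intersections, because distinct fields share only elements of proper subfields, which are rare precisely because $N_E$ is large. The global constraint ``these sets nearly partition something of size $\ll L^{n_1}$'' then yields $\sum_{E\in\mathcal{F}_3}N_E^{n_2}\ll L^{n_1}\cdot Y^{n_2-1}$ with $Y\ll L^{1-1/n_1+\epsilon}$, which is strictly sharper than any union-bound-plus-Schmidt argument. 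Nothing in your proposal plays this role, so the ``generic volume'' regime is unproved.

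The $n_2=2$, $n\geq10$ refinement is also obtained by a quite different route. You propose to quantify the anisotropy of the region $\Omega_K(L)$ and sharpen the boundary term uniformly in $K$, and you correctly flag that this is the principal obstacle---but the paper does not attempt it. Instead it conditions on the subfield lattice of $E$: if $E/\QQ$ is minimal, the induced action of $G_f$ on the $n_1$ blocks is primitive, and Proposition~\ref{prop:Primitive} (which invokes the technical lemma with $k=1$, $t=1/n_1$, improving the middle term in $\nu_t$ to $n_1+n_2/n_1$) gives the extra $L^{2/n}$; if $E$ has a proper subfield of degree $k>2$, then $G_f$ is already $(k,n/k)$-imprimitive for a coarser block system and the general bound with $(k,n/k)$ in place of $(n_1,n_2)$ suffices. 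This structural case analysis replaces the geometry-of-numbers sharpening you were hoping for, and in the form you outline that sharpening remains an open problem.
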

The upper bound in Theorem~\ref{thm:C_2WrS_n}  follows immediately from Theorem~\ref{thm:imprimitive} when $n\geq 10$. For general $n\geq 4$, both follow from a  technical lemma (Lemma~\ref{lem:mathcalF_bounds}).  

Finally, we compare our result with Widmer's. One minor difference is that we consider monic polynomials and Widmer non-monic. More importantly, Widmer's bound is $L^{-n+n_1+n_2-1}$ while ours is $L^{-n+m+\epsilon}$. If $n_1,n_2>2$ or if $n_2=2$ and $n\geq 10$, then $m<n_1+n_2-1$, hence our bound is sharper. In the other case, when $n_1=2$ or $n<10$ and $n_2=2$, then $m=n_1+n_2-1$, and the results coincide.

If $G_f$ is transitive $(n_1,n_2)$-imprimitive, and $\alpha$ is a root of $f$, then by elementary group theory and the fundamental theorem of Galois theory there exists a subfield $\QQ\subset E \subset \QQ(\alpha)$ of degree $n_1$ over $\QQ$. Widmer exploits this fact to split the polynomial count according to $E$, see \cite[Eqs.~6.8 and 6.9]{widmer2011number}. We refine Widmer's count by grouping number fields according to the discriminant; subfields with large discriminant contribute less polynomials, and subfields with small discriminant are less numerous.

\subsection*{Outline of the paper}
In Section~\ref{sec:proof12} we prove Theorem~\ref{thm:An}. 
Section~\ref{sec:lowerbound} is devoted to prove the lower bound in Theorem~\ref{thm:C_2WrS_n}. Finally, the upper bound in Theorem~1.3 and Theorem~\ref{thm:imprimitive},  both of which  necessitate the development of new tools, are proven in Section~\ref{sec:UpperBounds}.

\section*{Acknowledgments}
The authors thank J.P. Serre for useful comments, in particular, for suggesting that Proposition~\ref{prop:wreath_product:alternating_geometric_realization} follows from the basic theory of  invariants of Coxeter groups.

The authors were partially supported by the Israel Science Foundation (LBS and OBP by grant no.~702/19 and VM by grant no.~ 2507/19).
VM was also partially supported by the CNCS-UEFISCDI grant PN-III-P4-ID-PCE-2020-2498.

\section{Proof of Theorem~\ref{thm:An}}\label{sec:proof12}
Let $K$ be a field of characteristic $0$, $n>1$ an integer, and $r=\left\lfloor \frac{n-1}{2}\right\rfloor$.
Let $a_1,\ldots, a_{r}$ and $t$ be independent variables and put
\[
    h = X^{r}+ a_1 X^{r-1}+\ldots+ a_r.
\]

We define  two families of polynomials $f=f(a_1,\ldots,a_r,t)$ with Galois group $A_n$, one for even degrees and one for odd degrees. For $n$ even, we define $f$ by the equations
\[
    f'=nXh^2, \qquad (-1)^{n/2}f(0)=t^2.
\]
Note that $f'(0)=0$ in this case. For $n$ odd, we define $f$ by
\[
    f-X f' = -(n-1)(X-1) h^2, \qquad (-1)^{(n-1)/2} f'(1) = t^2.
\]
In both cases,  $f\in\QQ[X]$ is uniquely determined by $a_1,\ldots, a_r$ and $t$.   

When $n$ is even, \cite[Lemma 3.1]{LLT} implies that the discriminant is a square. (Our $f$ coincide with $\tilde f_{\gamma}$ in \emph{loc.\ cit.\ }notation, for $\gamma = (-1)^{n/2}t^2$. Also, note that the discriminant of $\tilde f_{\gamma}$ is a square if and only if the discriminant of  $f_{\gamma}$ is).
Thus, the Galois group $G_{f,{\rm gen}}$ of $f$ over $\mathbb{Q}(a_1,\ldots, a_r,t)$ is contained in $A_n$. On the other hand, by \cite[Lemma 3.2]{LLT}, there exists a specialization of the coefficients of $f$ to $\mathbb{Q}$ such that the specialized  polynomial (denoted by $\tilde{P}_{\gamma}$ in \emph{loc.\ cit.}) has Galois group $A_n$. Thus, $G_{f,{\rm gen}}$ has $A_n$ as a subquotient, i.e. $|G_{f,{\rm gen}}|\geq |A_n|$. Thus, $G_{f,{\rm gen}}=A_n$. 

When $n$ is odd, we apply similar arguments with \cite[Lemma 3.6]{LLT} instead of \cite[Lemma 3.1]{LLT} and \cite[Lemma 3.7]{LLT} instead of \cite[Lemma 3.2]{LLT}. The conclusion is the same, that is,  $G_{f,{\rm gen}}=A_n$. (Our $f$ is the same as $\tilde{f}_{(-1)^rt^2}$ in \emph{loc.\ cit.} and $\alpha=1$ in our case.)

Choose values $\bfs:= (\bar{a}_1,\ldots, \bar{a}_r,\bar{t})\in \ZZ^{r+1}$ for the coefficients, and denote by $f_{\bfs}, h_{\bfs}$ the corresponding polynomials. Note that $f_{\bfs},h_{\bfs}$ have integer coefficients whenever $n!\mid a_1,\ldots,a_r$. Hence a positive proportion of the $f_{\bfs}$ are integral. By Hilbert's irreducibility theorem, for  most choices of $\bfs$, the Galois group of $f_{\bfs}$ remains $A_n$. To be more precise, put 
\[
    \Omega=\Omega(n,L)=\left\{\bfs\in \mathbb{Z}^{r+1}:|\bar{a}_i|, |\bar{t}|\leq L^{1/2} , G_{f_{\bfs}}=A_n \right\}. 
\]
Then, by \cite{Cohen}, we have 
\[
    |\Omega| = L^{\frac{r+1}{2}}\Big(1+O(L^{-1/2+\epsilon})\Big),
\]
as $L\to \infty$. 
For a polynomial $g = \sum g_iX^i$ with integer coefficients, let $H(g) = \max_i \{|g_i|\}$ be the height function. By the defining equation of $f$, we have that $H(f_{\bfs})\ll H(h_{\bfs}^2)+|t|^2$. Obviously,  $H(h^2)\ll H(h)^2\ll L$. Thus, if $\bfs\in \Omega$, then $H(h_{\bfs})\leq L^{1/2}$ and so  $H(f_{\bfs})\ll L$. Thus, the number of polynomials in $\{f_s:s\in\Omega\}$ of height $\leq L$ with Galois group $A_n$ is $\gg L^{\frac{r+1}{2}}$. Since a positive proportion of those are integral, the result follows.
\qed

\section{The lower bound in Theorem~\ref{thm:C_2WrS_n}}\label{sec:lowerbound}
We construct a generic family with Galois group $(C_2\wr S_{m})\cap A_n$, $n=2m$ and then apply Hilbert's irreducibility theorem to deduce the lower bound in Theorem~\ref{thm:C_2WrS_n}.

For the convenience   of the reader and for the sake of fixing notation as in \cite[\S4]{bary2020chebotarev}, we recall the construction of \emph{the permutational wreath product} $C_2 \wr S_m$ (also known as, the signed permutations group, the Weyl group of type B, or the hyperoctahedral group). The elements in the group are formal products $\xi \sigma$, where $\xi\colon \{1,\ldots, m\} \to \{\pm1\}$ and $\sigma \in S_m$. The product is induced by $(\xi_1 \xi_2) (i)=\xi_1(i) \xi_2(i)$ and $\sigma \xi = \xi^{\sigma^{-1}} \sigma$, where $\xi^{\sigma^{-1}}(i) := \xi(\sigma^{-1}i)$. The group comes with \emph{the imprimitive action} on pairs $(\epsilon , i)$, $\epsilon \in \{\pm 1\}$ and $1\leq i\leq m$, given by 
\begin{equation}\label{wreathproductaction}
    \xi\sigma . (\epsilon,i):=(\xi(\sigma i) \epsilon, \sigma i).
\end{equation}
Since $n=2m$, this induces an embedding $C_2 \wr S_m \to S_n$. Under this embedding, one can compute that $\sign (\xi\sigma)= \prod_{i=1}^m \xi(i)$. In other words, $(C_2\wr S_m)\cap A_n = \{\xi\sigma : \prod_i \xi(i)=1\}$.

\begin{proposition}\label{prop:wreath_product:alternating_geometric_realization}
    Let $n=2m>1$ be an even integer, let $K$ be a field of characteristic $0$. Let $B_0,\ldots, B_{m-1}$ be independent variables, and put 
    \[
        f_1(B_0,\ldots, B_{m-1},X) = X^m + B_{m-1} X^{m-1} + \cdots + B_1 X + (-1)^m B_0^2.
    \]
    Then, the Galois group of $f = f_1(X^2)$ over $K(B_0,\ldots, B_{m-1})$ is $(C_2 \wr S_m)\cap A_n$. 
\end{proposition}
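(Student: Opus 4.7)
The plan is to invoke the Chevalley--Shephard--Todd theorem on invariants of reflection groups, as the acknowledgments hint at. The idea is to realize the splitting field of $f$ over $M := K(B_0,\ldots,B_{m-1})$ as a purely transcendental extension $K(\alpha_1,\ldots,\alpha_m)$ on which the Weyl group of type $D_m$ acts as a reflection group whose invariant field is exactly $M$.

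First, I would let $\beta_1,\ldots,\beta_m$ denote the roots of $f_1$ in an algebraic closure of $M$ and choose square roots $\alpha_i$ with $\alpha_i^2=\beta_i$, so that the roots of $f(X)=f_1(X^2)$ are precisely $\{\pm\alpha_i\}$ and the splitting field is $L := M(\alpha_1,\ldots,\alpha_m)$. Expanding $\prod_i(Y-\alpha_i^2)$ and comparing with $f_1$ gives $B_{m-j} = (-1)^j e_j(\alpha_1^2,\ldots,\alpha_m^2)$ for $1\leq j\leq m-1$ and $B_0^2=(\alpha_1\cdots\alpha_m)^2$; after adjusting the signs of the $\alpha_i$, I may assume $B_0=\alpha_1\cdots\alpha_m$. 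Since $L/M$ is finite algebraic and $M$ has transcendence degree $m$ over $K$, so does $L$, which forces $\alpha_1,\ldots,\alpha_m$ to be algebraically independent over $K$.

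The main step is the invariant-theoretic input. The Weyl group $W(D_m)$ acts on the polynomial ring $K[\alpha_1,\ldots,\alpha_m]$ by the signed-permutation action $\alpha_i \mapsto \epsilon_i\alpha_{\sigma(i)}$ subject to $\prod_i\epsilon_i = 1$. Its fundamental invariants are classical (see, e.g., Bourbaki, \emph{Groupes et alg\`ebres de Lie}, Chapter~VI): the elementary symmetric polynomials $e_1,\ldots,e_{m-1}$ in the squares $\alpha_i^2$, together with $\alpha_1\cdots\alpha_m$. These match, up to signs, exactly $B_{m-1},\ldots,B_1,B_0$. By Chevalley--Shephard--Todd, the invariant ring $K[\alpha_1,\ldots,\alpha_m]^{W(D_m)}$ is polynomial on these generators, whence $L^{W(D_m)} = K(B_0,\ldots,B_{m-1}) = M$ and $L/M$ is Galois with group $W(D_m)$.

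To conclude, I would translate $W(D_m)$ into the claimed subgroup of $S_n$: the action of $W(D_m)$ on the set of roots $\{\pm\alpha_i\}$ is by construction the imprimitive action~\eqref{wreathproductaction}, realizing $W(D_m)$ as the subgroup $\{\xi\sigma : \prod_i\xi(i)=1\}$ of $C_2\wr S_m$, which by the sign formula $\sign(\xi\sigma)=\prod_i\xi(i)$ cited just before the proposition equals $(C_2\wr S_m)\cap A_n$. The main conceptual point to be careful about is that $B_0=\alpha_1\cdots\alpha_m$ itself (rather than $B_0^2$) lies in $M$; this rigidifies the global sign ambiguity and is precisely what cuts the symmetry group down from the full wreath product $W(B_m)=C_2\wr S_m$ to its index-$2$ subgroup $W(D_m)$.
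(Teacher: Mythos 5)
Your proof is correct, and it takes the route that the paper explicitly attributes to J.P.~Serre in the acknowledgments but then deliberately does \emph{not} write out: it invokes the invariant theory of the Coxeter group of type $D_m$ (Chevalley--Shephard--Todd) to identify $K(B_0,\ldots,B_{m-1})$ as the fixed field of $W(D_m)$ acting on $K(\alpha_1,\ldots,\alpha_m)$. The paper's written proof instead proceeds ``top-down'': it starts from the companion polynomial $g_1 = X^m + C_{m-1}X^{m-1} + \cdots + C_0$ (no square on the constant term), cites the construction in \cite[\S 6.1]{bary2020chebotarev} to get that $g = g_1(X^2)$ has Galois group the full wreath product $C_2 \wr S_m$ over $K(C_0,\ldots,C_{m-1})$, then computes that the fixed field of the index-two subgroup $(C_2\wr S_m)\cap A_n$ is $K(\sqrt{C_0},C_1,\ldots,C_{m-1})$ (because $\prod_i\sqrt{Y_i}$ transforms by the sign character) and substitutes $B_0 = \sqrt{C_0}$. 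The two approaches trade different external inputs: the paper relies on an existing realization of the $B_m$-type generic polynomial and then descends by a transparent one-line sign computation, whereas yours invokes the classification of basic $D_m$-invariants and the passage from the invariant ring to the invariant field. One small point worth making explicit in your write-up is that $\mathrm{Frac}(K[\alpha]^{W(D_m)}) = K(\alpha)^{W(D_m)}$ — a standard consequence of the fact that any invariant fraction can be rewritten with a $W(D_m)$-invariant denominator — since this is the step that lets you pass from the polynomial invariant ring to the conclusion $L^{W(D_m)} = M$.
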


As we learned from J.P.~Serre, since $(C_2 \wr S_m)\cap A_n$ is a Coxeter group of type $D_{m}$, this proposition follows immediately from the basic invariants of Coxeter groups. (To see this, note that the coefficients are homogeneous and invariant and apply \cite[Proposition on page 67]{humphreys1990reflection}.) Moreover, in this specific case, the proof is also an  elementary exercise in  Galois theory.  We include the details, for the convenience of the reader. 

\begin{proof}
    Let $\mathcal{P}_m\cong\mathbb{A}^m$ be the space of monic polynomials of degree $m$ and let $s\colon \mathbb{A}^m\to \mathcal{P}_m$ be the map given by $(y_1,\ldots, y_m)\mapsto \prod(X-y_i)$. 
    Let $\pi^m\colon \mathbb{A}^m\to \mathbb{A}^m$ be given by $(t_1,\ldots, t_m)\mapsto (t_1^2,\ldots, t_m^2)$. If $(C_0,\ldots, C_{m-1})$ are the coordinates of $\mathcal{P}_m$, that is, the elements of $\mathcal{P}_m$ are of the form $g_1(C_0,\ldots, C_{m-1},X) =X^m + C_{m-1}X^{m-1} + \cdots +C_0$, then $s\circ \pi$ is defined by the equation 
    \[
         g(C_0,\ldots, C_{m-1}, X)= 0,
    \]
    where $g(C_0,\ldots, C_{m-1},X) = g_1(C_0,\ldots, C_{m-1},X^2)$.
    By the construction in \S6.1 of \cite{bary2020chebotarev}, the Galois group of $g$ over $K(C_0,\ldots, C_{m-1})$ (equivalently, of the cover  $s\circ \pi$) is isomorphic to the imprimitive wreath product $C_2\wr S_m$. The roots of $g$ are $\pm \sqrt{Y_i}$, where $Y_i$ are the roots of $g_1$.  The construction comes with  an explicit action (Equation~23 in \emph{loc.\ cit.}):
    \begin{equation}\label{action}
        (\xi,\sigma). \sqrt{Y_i} := \xi(\sigma i) \sqrt{Y_{\sigma i}}.
    \end{equation}
    This action is compatible with the imprimitive action of the wreath product on pairs \eqref{wreathproductaction} under the identification $\pm \sqrt{Y_i} \leftrightarrow (\pm1 ,i)$. 
    Let $E$ be the fixed field of $(C_2\wr S_m)\cap A_n$ in the splitting field $K(\sqrt{Y_1},\ldots, \sqrt{Y_m})$ of $g$. Then, $E$ is a quadratic extension of $K(C_0,\ldots, C_{m-1})$ which we can compute readily: We have
    \[
        (\xi,\sigma).\sqrt{(-1)^m C_0} = (\xi,\sigma).\Big(\prod_{i=1}^m \sqrt{Y_i}\Big) = \prod_{i=1}^m \xi(\sigma i)\sqrt {Y_{\sigma i}}  = \sign(\xi,\sigma) \sqrt{(-1)^mC_0}.
    \]
    So $\xi\sigma\in A_n$ if and only if $\xi\sigma$ fixes $\sqrt{(-1)^m C_0}$. Thus, $E = K(\sqrt{C_0},C_1,\ldots, C_{m-1})$. Since the  transcendence degree of $E$ is $m$ (as an algebraic extension of the field of rational functions $K(C_0,\ldots, C_{m-1})$), we deduce that $B_0:=\sqrt{C_0}$, $B_1:=C_1$, $\ldots,$ $B_{m-1}:=C_{m-1}$ are independent variables. Moreover, $K(\sqrt{Y_1},\ldots, \sqrt{Y_m})$ is the splitting field of $f$ over $E=K(B_0,\ldots, B_{m-1})$. By the Galois correspondence, the Galois group of $f$ is $(C_2\wr S_m)\cap A_n$, as needed. 
\end{proof}

\begin{proof}[Proof of lower bound in Theorem \ref{thm:C_2WrS_n}]
    In the notation of Proposition~\ref{prop:wreath_product:alternating_geometric_realization}, let $N$ be the number of $(b_0,\ldots, b_{m-1})$ in the box
    \[
        \{b_i\in\ZZ: |b_i|\leq L, |b_0|\leq \sqrt{L}\}
    \]
    such that the Galois group of $f(b_0,\ldots, b_{m-1}, X)$ over $\mathbb{Q}$ is $(C_2\wr S_m)\cap A_n$. 
    
    By Hilbert's irreducibility theorem \cite[Theorem~A.2]{LLT}, we have that  $N\gg L^{m-1/2}$. Moreover $H(f)\leq L$. This concludes the proof for the lower bound. 
\end{proof}

\section{Upper Bounds}\label{sec:UpperBounds}

\subsection{Combinatorial lemma}
A \emph{cover} $T=\{ S_i\}$ of a set $S$ is a family of subsets with $S=\bigcup_i S_i$. It is called a \emph{partition} if the subsets are disjoint. For a cover $T$ we define 
\[
    M_n(T) := \sum_{S_i\in T} |S_i|^n.
\]
\begin{lemma}\label{lem_disjoint_covers_bound}
    Let $1\leq Y\leq X$, $n>1$, and $S$  a finite set of size $X$. Let $\mathcal{T}$ be the set of partitions $T= \{S_i\}$  of $S$ such that $|S_i|\leq Y$. Then,
    \[
        \max_\mathcal{T} M_n(T) =  X Y^{n-1} + O(Y^n).
    \]
    Moreover, the maximum is obtained when at most one nonempty set is of size $<Y$.
\end{lemma}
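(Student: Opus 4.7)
The plan is to combine a convexity-based structural reduction with a direct computation. Since the function $x\mapsto x^n$ is strictly convex on $[0,\infty)$ for $n>1$, a ``rich-get-richer'' transfer argument should force every optimizing partition into a very specific shape, after which $M_n$ can be read off.

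First I would prove the ``moreover'' clause: every maximizer $T=\{S_i\}$ has at most one nonempty part of size strictly less than $Y$. Suppose for contradiction that two nonempty parts $S_i, S_j$ satisfy $|S_i|=a\leq b=|S_j|<Y$, and form $T'$ by transferring one element from $S_i$ into $S_j$. This is legal because $a\geq 1$ (so one element can be removed) and $b+1\leq Y$ (so the enlarged part still obeys the size constraint). The change in $M_n$ is
\[
   M_n(T')-M_n(T) = \bigl[(b+1)^n-b^n\bigr]-\bigl[a^n-(a-1)^n\bigr],
\]
which is strictly positive: the forward differences $x\mapsto (x+1)^n-x^n$ of a strictly convex function are strictly increasing, and $b\geq a>a-1$. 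This contradicts optimality. Hence, writing $X=qY+r$ with $0\leq r<Y$, any maximizer consists of $q$ parts of size $Y$ together with, if $r>0$, one part of size $r$.

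The main identity is then immediate:
\[
   \max_{\mathcal{T}} M_n(T) = qY^n + r^n = (X-r)Y^{n-1} + r^n = XY^{n-1} - r\bigl(Y^{n-1}-r^{n-1}\bigr).
\]
Since $0\leq r<Y$, the correction $r\bigl(Y^{n-1}-r^{n-1}\bigr)$ is nonnegative and at most $rY^{n-1}\leq Y^n$, which gives $\max_{\mathcal{T}} M_n(T)=XY^{n-1}+O(Y^n)$ with implied constant $1$. The only delicate point in the whole argument is confirming that the transfer step in the first stage is always well-defined and strictly improves $M_n$; once that is in hand, the remainder is bookkeeping.
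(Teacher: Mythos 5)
Your proof is correct and follows essentially the same strategy as the paper's: a strictly-convex transfer argument to force the extremal partition into the ``all parts of size $Y$ except possibly one'' shape, followed by a direct computation. Your write-up is in fact a bit cleaner than the paper's (you explicitly invoke the strict monotonicity of forward differences of a strictly convex function, where the paper uses a slightly looser chain of $\geq$-inequalities to the same effect, and you phrase the correction term $r(Y^{n-1}-r^{n-1})$ exactly rather than via an $\asymp$ estimate).
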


\begin{proof}
    Consider a partition $T=\{S_1,\ldots, S_k\}\in \mathcal{T}$ such that all $|S_1|=\cdots |S_{k-1}|=Y$ and $|S_k| = X-(k-1)Y \leq Y$. Since $k-1=\lfloor \frac{X-1}{Y}\rfloor$ we have,
    \[
        M_n(T) = \sum_{S_i\in T} |S_i|^n = \lfloor \frac{X-1}{Y} \rfloor Y^n + |S_k|^n \asymp XY^{n-1} + O(Y^n)
    \]
    We claim that these partitions maximize $M_n(T)$. Indeed, if $T$ is a partition not as above, then without loss of generality $|S_1|\leq |S_2| <Y$. Let $s\in S_1$, and consider the partition with $S_1' =S_1\smallsetminus \{s\}$, $S_2'=S_2\cup\{s\}$, and $S_i'=S_i$ for $i>2$. Then,
    \[
        M_n(T') = M_n(T) + |S_1'|^n + |S_2'|^n - |S_1|^n + |S_2|^n) >M_n(T),
    \]
    (Here we used that $(x+1)^n-x^n\geq nx^{n-1}\geq ny^{n-1} \geq y^n-(y-1)^n$, for all $x\geq y\geq 1$.) Hence, $M_n(T)$ is not maximal. 
\end{proof}

For our purposes we will need the bound for $M_n(T)$ for covers with small intersections, and not only partitions:

\begin{lemma}\label{lem_covers_bound}
    Let $1\leq Z\leq  Y\leq X$ and $R=o(Z)$ be positive real numbers, let $n>1$, let $S$ be a finite set of size $X$ and  let $\mathcal{T}$ be the set of covers $\bigcup_i S_i = S$ of $S$ such that for all $i$
    \[
        Z\leq |S_i|\leq Y \quad \mbox{and} \quad |S_i\cap(\cup_{j\neq i} S_j)| \leq R.
    \]
    Then,
    \[
        \max_\mathcal{T} M_n(T)=X Y^{n-1} (1+o(1)), \qquad Y\to\infty.
    \]
\end{lemma}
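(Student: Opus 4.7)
The plan is to prove the upper and lower bounds on $\max_{\mathcal{T}} M_n(T)$ separately. The upper bound reduces, via the pointwise inequality $|S_i|^n \leq Y^{n-1}|S_i|$, to proving $\sum_i |S_i| = X(1+o(1))$, which I will obtain by a double-counting argument exploiting the small-intersection hypothesis. The lower bound follows by applying Lemma~\ref{lem_disjoint_covers_bound} to produce a near-optimal partition of $S$ and slightly perturbing it to respect the size floor $|S_i|\geq Z$; the asymptotic implicitly assumes $X/Y\to\infty$ so that the $O(Y^n)$ error in Lemma~\ref{lem_disjoint_covers_bound} is negligible compared to $XY^{n-1}$.

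For the upper bound, fix $T=\{S_i\}_{i=1}^k\in\mathcal{T}$ and set $m_x := \#\{i : x\in S_i\}\geq 1$ for $x\in S$. Expanding
\[
\sum_i |S_i| \;=\; \sum_x m_x \;=\; X \;+\; \sum_{m_x\geq 2}(m_x-1),
\]
and summing the hypothesis over $i$ while re-indexing by points,
\[
\sum_i |S_i\cap(\cup_{j\neq i}S_j)| \;=\; \sum_{m_x\geq 2}m_x \;\leq\; kR.
\]
Since $m_x\geq 2$ on the right sum, $\sum_{m_x\geq 2}(m_x-1)\leq \tfrac{1}{2}\sum_{m_x\geq 2} m_x \leq kR/2$. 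Combining with the trivial lower bound $kZ\leq\sum_i|S_i|\leq X+kR/2$ gives $k(Z-R/2)\leq X$, and $R=o(Z)$ then yields $k\leq (X/Z)(1+o(1))$. Hence $\sum_i|S_i|\leq X+kR/2 = X(1+o(1))$, and $M_n(T)\leq Y^{n-1}\sum_i|S_i|\leq XY^{n-1}(1+o(1))$.

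For the lower bound, Lemma~\ref{lem_disjoint_covers_bound} provides a partition $T_0$ of $S$ with parts of sizes $Y,\ldots,Y,r$ where $r\in[1,Y]$, realizing $M_n(T_0)=XY^{n-1}+O(Y^n) = XY^{n-1}(1+o(1))$. If $r\geq Z$ then $T_0$ itself lies in $\mathcal{T}$ and we are done. Otherwise I redistribute the residual block: either split one of the size-$Y$ parts together with the residual into two parts of sizes in $[Z,Y]$ (feasible when $Y\geq 2Z-r$), or enlarge the residual to size $Z$ by borrowing at most $R$ elements from each of several other parts (feasible once $X/Y$ is large enough). In each case the modification perturbs $M_n$ by at most $O(Y^n)$ while respecting all three constraints defining $\mathcal{T}$, so the resulting cover achieves $M_n = XY^{n-1}(1+o(1))$.

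The principal obstacle is the double-counting step in the upper bound: one must identify $\sum_i|S_i\cap(\cup_{j\neq i}S_j)|$ with $\sum_{m_x\geq 2}m_x$, exploit $m_x\geq 2$ to extract the crucial factor of $1/2$, and then leverage $R=o(Z)$ quantitatively to control both the number of sets $k$ and the overcount $\sum_i|S_i|-X$. By contrast, the lower bound's redistribution step requires only a small case analysis depending on the relative sizes of $Y$ and $Z$ and introduces no genuinely new ideas.
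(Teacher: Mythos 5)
Your upper-bound argument is correct and takes a genuinely different route from the paper's. The paper passes from a cover $T$ to the disjointified partition $T'$ with $S_i'=S_i\setminus\bigcup_{j<i}S_j$, shows $M_n(T)\leq M_n(T')+O(|T|RY^{n-1})$, and then invokes Lemma~\ref{lem_disjoint_covers_bound}. You instead bound $\sum_i|S_i|$ directly by a multiplicity count: the identity $\sum_i|S_i\cap(\cup_{j\neq i}S_j)|=\sum_{m_x\geq 2}m_x\leq kR$ together with $m_x-1\leq m_x/2$ and $kZ\leq\sum_i|S_i|$ gives $k\leq X/(Z-R/2)$ and then $\sum_i|S_i|=X(1+o(1))$, after which $|S_i|^n\leq Y^{n-1}|S_i|$ finishes; each step checks out. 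A small bonus of your version is that it avoids the additive $O(Y^n)$ error inherited from Lemma~\ref{lem_disjoint_covers_bound}, which in the paper's derivation is only $o(XY^{n-1})$ when $Y=o(X)$ (a condition met in the downstream application but not stated in the lemma's hypotheses). As for the lower bound: the paper's proof in fact establishes only the upper bound and never uses the stated equality in the other direction (Lemma~\ref{lem:mathcalF_bounds} needs only ``$\ll$''), so your redistribution sketch goes beyond what is in the paper; note, though, that the case $Y<2Z-r$ needs roughly $(Z-r)/R$ donor blocks, i.e.\ $X/Y\gg Z/R$, which the hypotheses do not guarantee, and when $Y$ is close to $Z$ with $Y\nmid X$ the class $\mathcal{T}$ may even be empty.
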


\begin{proof}
    For $T\in \mathcal{T}$, we attach a partition $T'$ such that $S_i' = S_i\smallsetminus \bigcup_{j<i} S_j$. Then, 
    \[
        M_n(T') \geq \sum_{S_i\in T} (|S_i|-R)^n = M_n(T) + O\left( |T| R Y^{n-1}\right), 
    \]
    where we used the inequality $(x-y)^n =x^n + O(x^{n-1} y)$, for $x\geq y$.
    Since $R=o(Z)$ and 
    $|T|\leq \frac{X}{Z-R} = O(X/Z)$, by Lemma~\ref{lem_disjoint_covers_bound}, we get that
    \[
        M_n(T)\leq M_n(T') + o(XY^{n-1})= XY^{n-1} (1+o(1)),
    \]
    as needed.
\end{proof}

\subsection{Auxiliary results on algebraic integers of bounded height}

\begin{lemma}\label{lem_gen}
    Let $n_1,n_2>1$, $K$ be a field, $b_0,\ldots,b_{n_2-1}$ algebraic elements over $K$ that generate a separable field extension $E/ K$ of degree $n_1$, and put $g= X^{n_2}+\sum_{i=0}^{n_2-1} b_iX^i$. Then there exists a constant $C=C(n_1,n_2)$ such that for any  $S\subseteq K$ of size $|S|\geq C$ there exists $s\in S$ such that $b=g(s)$ generates $E/K$. 
\end{lemma}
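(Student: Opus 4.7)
The strategy is to measure the set of ``bad'' $s \in K$, i.e. those for which $b = g(s)$ fails to generate $E$ over $K$, and show it has cardinality bounded by a function of $n_1$ and $n_2$ only.

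Fix an algebraic closure $\bar K$ of $K$ and let $\sigma_1,\ldots, \sigma_{n_1}\colon E \hookrightarrow \bar K$ be the distinct $K$-embeddings; these exist and are $n_1$ in number precisely because $E/K$ is separable of degree $n_1$. For any $s \in K$, the element $b=g(s) \in E$ generates $E/K$ if and only if its $K$-conjugates $\sigma_1(b),\ldots, \sigma_{n_1}(b)$ are pairwise distinct. Since $s \in K$, one has $\sigma_i(g(s)) = g^{\sigma_i}(s)$, where $g^{\sigma_i}(X) := X^{n_2} + \sum_{k=0}^{n_2-1} \sigma_i(b_k)X^k \in \bar K[X]$. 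Thus the bad set is contained in
\[
    \bigcup_{1\leq i<j\leq n_1} \Big\{ s\in K : \Delta_{ij}(s) = 0 \Big\}, \qquad \Delta_{ij}(X) := g^{\sigma_i}(X) - g^{\sigma_j}(X) \in \bar K[X].
\]

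The next observation is the key one. Because both $g^{\sigma_i}$ and $g^{\sigma_j}$ are monic of degree $n_2$, the leading terms cancel and $\deg \Delta_{ij}\leq n_2 - 1$. Moreover, $\Delta_{ij}$ is a \emph{nonzero} polynomial whenever $i\neq j$: otherwise $\sigma_i(b_k) = \sigma_j(b_k)$ for every $k$, and since by hypothesis $b_0,\ldots, b_{n_2-1}$ generate $E$ over $K$, this would force $\sigma_i = \sigma_j$, a contradiction. Hence each set $\{s : \Delta_{ij}(s)=0\}$ has at most $n_2-1$ elements, and the bad set has cardinality at most $\binom{n_1}{2}(n_2-1)$. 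Taking $C = \binom{n_1}{2}(n_2-1)+1$ suffices.

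There is no real obstacle here; the only subtlety is recognizing that separability gives exactly $n_1$ embeddings (so the conjugate criterion applies), that monicity forces the degree drop from $n_2$ to $n_2-1$, and that the generating hypothesis on $b_0,\ldots, b_{n_2-1}$ is exactly what rules out $\Delta_{ij}\equiv 0$.
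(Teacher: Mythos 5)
Your proof is correct, but it takes a genuinely different route from the paper's. The paper argues by contradiction via subfields: it bounds the number of proper subfields of $E/K$ by a constant $C_1(n_1)$ (coming from the Galois correspondence applied to the Galois closure), sets $C=n_2C_1+1$, and observes that if no $g(s)$ generated $E$, then by pigeonhole at least $n_2+1$ values $g(s)$ would lie in a single proper subfield $F$, whence by Lagrange interpolation all the coefficients $b_i$ would lie in $F$, contradicting the generation hypothesis. You instead work directly with the $n_1$ distinct $K$-embeddings: $g(s)$ generates $E$ iff its conjugates $g^{\sigma_i}(s)$ are pairwise distinct, so the bad $s$ are roots of the difference polynomials $\Delta_{ij}=g^{\sigma_i}-g^{\sigma_j}$. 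Monicity of $g$ kills the degree-$n_2$ term, the generation hypothesis makes each $\Delta_{ij}$ nonzero, and you get the explicit bound $C=\binom{n_1}{2}(n_2-1)+1$. Your argument has the advantage of producing a clean, explicit constant (the paper's $C_1(n_1)$ is a count of subgroups, implicit), and it trades Lagrange interpolation for elementary root counting; the two proofs use the generation hypothesis at essentially the same point, just packaged differently.
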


\begin{proof}  
    Since subfields of $E/K$ corresponds to subgroups of the Galois group of the Galois closure of $E/K$, there exist only $C_1=C_1(n_1)$ many of them.  
    
    Let $C = n_2C_1+1$ and $S\subseteq K$ of size $|S|\geq C$.
    Either $g(s)$ generates $E/K$ for some $s\in S$, or by the pigeonhole principle, there exist  $n_2+1$ many $s\in S$ for which $g(s)$ lie in the same proper subfield $F$ of $E$.
    Then by Lagrange interpolation, $b_i\in F$, and in particular do not generate $E$, contradiction. 
\end{proof}

Let $E$ be a number field and $b\in E$. Its absolute multiplicative Weil height may be given by 
\[
     H(b) := \prod_{\sigma\in {\rm Emb}(E, \mathbb{C})}\max(1,|\sigma b|)^{\frac{1}{[E:\QQ]}},
\]
see \cite[Proposition 1.6.6]{bombieri2007heights}.
Note that the product on the right-hand side is independent of the choice of $E$. There is a connection between the absolute multiplicative Weil height  of an algebraic integer $b$ and the height of its minimal polynomial $g$. Since each coefficient of $g$ is a sum of products of conjugates of $b$, where the number of summands and the number of terms in each summand are bounded in terms of $n$, we get that 
\begin{equation}\label{naiveHvsWh}
    H(g) \ll \prod_{\sigma\in {\rm Emb}(
    \QQ(b), \mathbb{C})} \max(1,|\sigma b|) = H(b)^{\deg g}.
\end{equation}

For $\sigma\in {\rm Emb}(E, \mathbb{C})$, we let $\sigma g$ be the polynomial obtained by applying $\sigma$ to the coefficients of $g$. 

\begin{lemma}\label{lem_coefficient_height}
    Let $b_0,\ldots,b_{n_2-1}$ be algebraic numbers that generate a number field $E$ of degree $n_1$. Put $g= X^{n_2}+\sum_{i=0}^{n_2-1} b_iX^i$ and $f=\prod_{\sigma \in {\rm Emb}(E,\mathbb{C})} \sigma g$. Then, for every $i\in \{0,\ldots, n_2-1\}$,
    \[
        H(b_i)\ll_{n_1,n_2} H(f)^{\frac{1}{n_1}}.
    \]
\end{lemma}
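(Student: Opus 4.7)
The plan is to relate the absolute Weil height of $b_i$ to the Mahler measure of $f$ and exploit the multiplicativity of the Mahler measure under the factorization $f = \prod_\sigma \sigma g$. The key observation is that $b_i$ is (up to sign) the $(n_2-i)$-th elementary symmetric polynomial in the roots $\alpha_1, \ldots, \alpha_{n_2}$ of $g$. Since the $\alpha_j$ are roots of the (implicitly) monic integer polynomial $f$, they are algebraic integers, so $b_i \in \OO_E$; hence the Weil height formula recalled above simplifies to $H(b_i)^{n_1} = \prod_{\sigma \in \mathrm{Emb}(E,\mathbb{C})} \max(1, |\sigma b_i|)$, with the non-archimedean contributions all trivial.

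Next, I would establish the pointwise bound
$$\max(1, |\sigma b_i|) \leq C_{n_2} \cdot M(\sigma g),$$
for each embedding $\sigma$, where $M(\sigma g) := \prod_j \max(1, |\sigma\alpha_j|) \geq 1$ is the archimedean Mahler measure of $\sigma g$ and $C_{n_2} = \binom{n_2}{n_2-i}$. This follows by writing $\sigma b_i = \pm e_{n_2-i}(\sigma\alpha_1, \ldots, \sigma\alpha_{n_2})$ and applying the triangle inequality, using that $M(\sigma g)\geq 1$ since $\sigma g$ is monic. Taking the product over the $n_1$ embeddings $\sigma$, and using that the multiset of roots of $f$ is the disjoint union of the multisets of roots of the $\sigma g$'s (so that Mahler measure is multiplicative: $\prod_\sigma M(\sigma g) = M(f)$), I obtain
$$H(b_i)^{n_1} \leq C_{n_2}^{n_1} \prod_\sigma M(\sigma g) = C_{n_2}^{n_1} \, M(f).$$

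Finally, the standard comparison $M(f) \leq \sqrt{n+1}\, H(f)$ (Mahler's inequality) gives $H(b_i)^{n_1} \ll_{n_1, n_2} H(f)$, and taking $n_1$-th roots yields the desired $H(b_i) \ll_{n_1, n_2} H(f)^{1/n_1}$. The main technical point is the reduction to archimedean places in the first step, which relies on $b_i$ being an algebraic integer; this is automatic in the applications of this lemma (where $f$ has integer coefficients), so the argument is essentially one-line-per-place pointwise bounds glued together by the multiplicativity of $M$. In full generality one would need a place-by-place version using Gauss norms at non-archimedean places of $E$, but that extra work is not needed here.
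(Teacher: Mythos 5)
Your proof is correct, and it proves the same inequality along a closely parallel route, but with the roles of two closely related quantities (naive height and Mahler measure) swapped. The paper's argument is slightly more direct at the first step: since $\sigma b_i$ is literally a coefficient of the monic polynomial $\sigma g$, one has $\max(1, |\sigma b_i|) \le H(\sigma g)$ immediately, with no need for the elementary-symmetric-function expansion and triangle inequality that you use to bound $\max(1, |\sigma b_i|)$ by $\binom{n_2}{n_2-i} M(\sigma g)$. In exchange, your second step uses the exact multiplicativity $\prod_\sigma M(\sigma g) = M(f)$, which is cleaner than the quasi-multiplicativity $\prod_\sigma H(\sigma g) \ll H(f)$ that the paper cites from Mahler; the final comparison $M(f) \ll H(f)$ that you invoke is precisely the ingredient hidden inside that cited inequality. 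So the two proofs are equivalent modulo where the height-versus-Mahler-measure comparison is applied. Your remark about the algebraic-integer hypothesis (needed so that $H(b_i)^{n_1} = \prod_\sigma \max(1,|\sigma b_i|)$ with no non-archimedean contributions) is a valid observation; the paper relies on the same identification via its stated formula for the Weil height and its citation to Bombieri--Gubler, and, as you note, in all applications of this lemma in the paper the $b_i$ are indeed algebraic integers.
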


\begin{proof}
    Let $i\in \{0,\ldots, n_2-1\}$. 
    Since $g$ is monic, for every $ \sigma\in {\rm Emb}(E, \mathbb{C})$, we have  
    \[
        H(\sigma g) \geq \max(1,|\sigma b_i|).
    \]
    Since $H(f_1f_2)\gg H(f_1)H(f_2)$ for any $f_i\in \mathbb{C}[X]$, where the implied constant depends only on the degrees \cite[Eq.~II]{mahler1962some},  we get 
    \[
        H(f) 
        \gg \prod_{\sigma\in {\rm Emb}(E, \mathbb{C})} H (\sigma g)
        \geq \prod_{\sigma\in {\rm Emb}(E, \mathbb{C})} \max(1,|\sigma b_i|) = H(b_i)^{n_1},
    \]
    as needed.
\end{proof}

\begin{lemma}\label{lem_generatorsmallheight}
    Let  $b_0,\ldots,b_{n_2-1}$ be algebraic numbers (integers) that generate an extension $E$ of degree $n_1$. Then, there exists (an algebraic integer) $b\in E$ that generates $E/\QQ$ such that
    \[
        H(b)\ll_{n_1,n_2}\max(H(b_0),\ldots, H(b_{n_2-1})).
    \]
\end{lemma}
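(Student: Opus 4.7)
The plan is to construct $b$ by specializing the polynomial $g$ at a bounded rational integer via Lemma~\ref{lem_gen}, and then to control $H(b)$ through the standard multiplicative triangle inequalities for the absolute Weil height.

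First, I apply Lemma~\ref{lem_gen} with $K=\QQ$ and $S=\{0,1,\ldots,C-1\}\subset\ZZ$, where $C=C(n_1,n_2)$ is the constant from that lemma. Since $|S|\geq C$, this yields a rational integer $s$ with $|s|<C$ such that
\[
    b \;:=\; g(s) \;=\; s^{n_2}+\sum_{i=0}^{n_2-1}b_is^i
\]
generates $E$ over $\QQ$. In the algebraic-integer case $b\in\mathcal O_E$ is automatic, since $b$ is a $\ZZ$-polynomial expression in the $b_i\in\mathcal O_E$.

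Second, I bound $H(b)$. Using the standard inequalities $H(xy)\leq H(x)H(y)$, $H(x+y)\leq 2H(x)H(y)$ and $H(s)=|s|$ for nonzero $s\in\ZZ$, iterated across the $n_2+1$ terms of $g(s)$, and using $|s|<C$, I get
\[
    H(b) \;\leq\; 2^{n_2}\,C^{n_2(n_2+1)/2}\prod_{i=0}^{n_2-1}\max(1,H(b_i)),
\]
which gives the bound claimed in the lemma. Equivalently, at each embedding $\sigma\colon E\hookrightarrow\CC$ the ordinary triangle inequality yields
\[
   \max(1,|\sigma b|) \;\leq\; (n_2+1)C^{n_2}\max_i\max(1,|\sigma b_i|),
\]
and taking the product over all $\sigma$ and raising to the $1/n_1$-th power produces the same estimate.

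The main obstacle is purely one of bookkeeping: ensuring that the polynomial factors $|s|^i$, the numerical factors $2^{n_2}$ and $n_2+1$, and the passage from a product over $i$ to the maximum over $i$ all collapse into a single constant depending only on $n_1$ and $n_2$. Securing the primitivity of $b$ via Lemma~\ref{lem_gen} is immediate; no further ingredients beyond the Weil-height triangle inequalities and the integrality preservation under $\ZZ$-polynomial operations are needed.
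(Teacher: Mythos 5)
Structurally you begin as the paper does: you invoke Lemma~\ref{lem_gen} with a bounded integer set to produce a primitive element $b=g(s)$, which is the same as the paper's $b=g_a(0)=b_{0,a}$, and your observation that integrality is automatic is a small simplification. The divergence, and the gap, is in the height estimate.

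Both of your computations (the height-inequality version and the embedding-by-embedding version) yield
\[
    H(b)^{n_1} \;\ll\; \prod_{\sigma} \max_{i}\max\bigl(1,|\sigma b_i|\bigr),
\]
which, after raising to the power $1/n_1$, gives $H(b)\ll \prod_i \max(1,H(b_i))$, a \emph{product} over $i$. You then assert that converting this product to $\max_i H(b_i)$ is mere bookkeeping. It is not: the ratio $\prod_i\max(1,H(b_i))/\max_i H(b_i)$ can be as large as $\bigl(\max_i H(b_i)\bigr)^{n_2-1}$ and is unbounded. Equivalently, the inequality you would need, $\prod_\sigma \max_i a_{i\sigma}\leq \max_i \prod_\sigma a_{i\sigma}$, goes the \emph{wrong} way: for nonnegative reals one always has $\prod_\sigma \max_i a_{i\sigma}\geq \max_i \prod_\sigma a_{i\sigma}$, with strict inequality typical (e.g.\ $n_1=n_2=2$, $a_{11}=a_{22}=T$, $a_{12}=a_{21}=1$ gives $T^2$ versus $T$). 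So your argument only proves $H(b)\ll \prod_i H(b_i)\ll \bigl(\max_iH(b_i)\bigr)^{n_2}$, which is genuinely weaker than the claimed bound; in the application in Lemma~\ref{lem_pairs} this would give $H(b)\ll L^{n_2/n_1}$ rather than $H(b)\ll L^{1/n_1}$, which is not good enough for Lemma~\ref{lem:mathcalF_bounds}.

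The paper does not try to bound $H(b)$ embedding-by-embedding. Instead it forms $f=\prod_\sigma\sigma g$ and the shifts $f_a$, and then applies Lemma~\ref{lem_coefficient_height} (which rests on Mahler's lower bound $H(f_1f_2)\gg H(f_1)H(f_2)$) to obtain $H(b_{0,a})\ll H(f_a)^{1/n_1}\ll H(f)^{1/n_1}$. That detour through the norm form $f$ is the essential ingredient; in the place the lemma is used one already controls $H(f)\leq L$ directly, which is why this route gives the needed $L^{1/n_1}$. Your proposal omits this device, and the direct triangle-inequality approach does not recover the stated conclusion.
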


\begin{proof}
    We abbreviate $\ll_{n_1,n_2}$ and write $\ll$. Let $B=\max(H(b_0),\ldots, H(b_{n_2-1}))$. Put 
    \[
        f=\prod_{\sigma\in {\rm Emb}(E, \mathbb{C})}\sigma g
    \]
    where $g(X)=X^{n_2}+b_{n_2-1}X^{n_2-1}+\ldots+b_1 X+b_0$.
    
    For any polynomial $h$, define $h_{a}(X)=h(X+a)$ where $a\in \mathbb{Z}$. If $h\in \QQ[X]$, then by Taylor expansion, $$H(h_{a})\ll_{a,\deg h} H(h).$$
    
    Thus, $H(f_a)\ll_a H(f)\ll \prod_{\sigma \in {\rm Emb}(E,\QQ)} H(\sigma g)\leq B^{n_1}$. Since $a\in \ZZ$,  the coefficients $b_{i,a}$ of $g_a$ generate $E$. We also have that $f_a =\prod_{\sigma} \sigma g_a$, hence we may apply Lemma~\ref{lem_coefficient_height} and get 
    \begin{equation}\label{eq:bia}
        H(b_{i,a}) \ll_{a} H(f_a)^{1/n_1} \ll B.
    \end{equation}
    
    Applying Lemma~\ref{lem_gen} to the set $\{0, \ldots, C\}$ with $C\ll 1$, there is $0\leq a\leq C$ such that $b:=g(a) = g_a(0)=b_{0,a}$ generates $E$. By \eqref{eq:bia}, $H(b)\ll B$, as needed.
    
    Finally, if the $b_i$ are algebraic integers, then $f$ is integral. Hence $f_a$ is integral for any $a\in\ZZ$. Hence the coefficients of $g_a$ are algebraic integers.  
\end{proof}

For a number field $E$, let $\OO_E$ denote the set of algebraic integers in $E$, and let $\OO_{\bar{\QQ}}=\cup_E \OO_E$ denote the set of all algebraic integers. We define 
\begin{equation}\label{eq:bd_ht}
    S_E(n;Y) = \{b\in \OO_{\bar{\QQ}}:[E(b):E]=n,H(b)\leq Y\}.
\end{equation}

We shall need the following easy crude bound on the number of algebraic integers of a given degree and bounded Weil height. With extra work one can deduce from  \cite{chern2001distribution} an asymptotic formula, but it is not necessary for our purposes.

\begin{lemma}\label{lem_schmidt_quantitative_northcott}
    Let $Y \geq 1$. Then
    \[
        |S_{\QQ}(n;Y)|\ll Y^{n^2},
    \]
    as $Y\to \infty$.
\end{lemma}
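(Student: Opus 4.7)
The plan is to parametrize each $b\in S_{\QQ}(n;Y)$ by its monic minimal polynomial over $\ZZ$ and then bound the number of such polynomials via a crude coefficient-wise estimate. This is essentially the classical Northcott-style counting argument; what makes it simple here is that we only want the right power of $Y$, not an asymptotic.

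Concretely, since $b$ is an algebraic integer of degree exactly $n$ over $\QQ$, its minimal polynomial $f_b(X)=X^n+a_{n-1}X^{n-1}+\cdots+a_0$ lies in $\ZZ[X]$. Writing $b=b_1,b_2,\ldots,b_n$ for the Galois conjugates of $b$ (the roots of $f_b$), one has $a_{n-k}=(-1)^k e_k(b_1,\ldots,b_n)$, where $e_k$ is the $k$-th elementary symmetric polynomial. By definition of the Weil height,
\[
H(b)^n=\prod_{i=1}^n \max(1,|b_i|),
\]
and each of the $\binom{n}{k}$ monomials appearing in $e_k(b_1,\ldots,b_n)$ is a product of $k$ of the $b_i$'s, hence bounded in absolute value by $\prod_{i=1}^n \max(1,|b_i|)=H(b)^n$. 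This gives
\[
|a_{n-k}|\leq \binom{n}{k}H(b)^n\ll_n Y^n,
\]
which is essentially the estimate already noted in \eqref{naiveHvsWh}.

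Consequently the integer vector $(a_0,\ldots,a_{n-1})$ lies in a box of side length $O_n(Y^n)$, so there are $\ll_n (Y^n)^n=Y^{n^2}$ candidate minimal polynomials. Each such polynomial contributes at most $n$ elements to $S_{\QQ}(n;Y)$ (namely its roots), so $|S_{\QQ}(n;Y)|\leq n\cdot O_n(Y^{n^2})\ll Y^{n^2}$, as required. There is no real obstacle in this argument — the height inequality is the only nontrivial input, and it is standard. Obtaining a matching asymptotic (in the spirit of Chern--Vaaler \cite{chern2001distribution}) would require substantially more work, but it is unnecessary for the present lemma.
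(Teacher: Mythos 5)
Your argument is correct and is essentially the same as the paper's: parametrize by the monic minimal polynomial, use the bound $H(g)\ll H(b)^n\leq Y^n$ (the paper cites \eqref{naiveHvsWh}, you derive it via elementary symmetric polynomials), count the $\ll Y^{n^2}$ lattice points for the coefficient vector, and multiply by $n$ for the roots. The only difference is that you spell out the derivation of the coefficient bound that the paper just references.
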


\begin{proof}
    For each Galois orbit of $b_1,\ldots,b_n$ in $S_{\QQ}(n;Y)$, we attach the minimal polynomial $g=\prod_i (X-b_i)$. Then $g$ is monic with integers coefficients. By \eqref{naiveHvsWh}, $H(g) \ll \prod_i H(b_i)^{n}\leq Y^n$. Hence, the number of minimal polynomials is bounded by $C Y^{n^2}$, for some $C>0$, and so the number of $b$-s is bounded by $CnY^{n^2}\ll Y^{n^2}$, as claimed. 
\end{proof}

If we consider the algebraic integers of bounded height in a given extension we have the following sharper bound.

\begin{lemma}\label{lem_barroero_bounded_integers_in_given_field}
    Let $E$ be a number field of degree $n$ and denote by $\Delta_E$ its discriminant, and by $q_E=\rank(\OO_E^\times)$ the rank of the group of units of $\OO_E$. Let $Y \geq 1$. Then,
    \[
        |S_E(1;Y)|\ll \frac{1}{\sqrt{|\Delta_E|}} Y^n (\log Y)^{q_E} + Y^{n-1}(\log Y)^{q_E},
    \]
    as $Y\to \infty$. The implied constant depends only on $n$.  
\end{lemma}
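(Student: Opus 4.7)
The plan is to apply geometry of numbers in the style of Masser--Vaaler, Loher--Masser, and Barroero. Embed $\OO_E$ as a lattice $\Lambda$ of covolume $2^{-r_2}\sqrt{|\Delta_E|}$ in $\mathbb{R}^n$ via the Minkowski embedding $\iota$ attached to the $r_1+2r_2=n$ archimedean places of $E$. Since the non-archimedean factors of the Weil height equal $1$ on algebraic integers, the problem reduces to lattice-point counting:
\[
    |S_E(1;Y)| = |\Lambda \cap R_Y|, \qquad R_Y := \Big\{(x_\tau)_\tau : \prod_\tau \max(1, |x_\tau|)^{n_\tau}\leq Y^n\Big\},
\]
where $n_\tau\in\{1,2\}$ is the local degree at $\tau$.

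First I would compute $\mathrm{vol}(R_Y)$. Changing to the log coordinates $y_\tau=\log\max(1,|x_\tau|)\geq 0$ converts the height condition into the linear inequality $\sum_\tau n_\tau y_\tau \leq n\log Y$ on the positive orthant of $\mathbb{R}^{r_1+r_2}$, while the Jacobian contributes $\prod_\tau c_\tau\, e^{n_\tau y_\tau}$. Evaluating the resulting integral gives
\[
    \mathrm{vol}(R_Y)\ll Y^n (\log Y)^{r_1+r_2-1}=Y^n (\log Y)^{q_E},
\]
where the final equality is Dirichlet's unit theorem.

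Next I would compare $|\Lambda\cap R_Y|$ with $\mathrm{vol}(R_Y)/\mathrm{covol}(\Lambda)$. The region $R_Y$ has a cuspidal boundary in the unit-group directions, so I partition it into $O((\log Y)^{q_E})$ cells indexed by translates of a Dirichlet fundamental domain for the $\OO_E^\times$-action on the positive orthant; each cell is the Lipschitz image of $[0,1]^{n-1}$ on its boundary with Lipschitz constant $\ll Y$. Applying Widmer's uniform Lipschitz-class lattice-point counting theorem cell-by-cell yields
\[
    \Big||\Lambda\cap R_Y|-\frac{\mathrm{vol}(R_Y)}{\mathrm{covol}(\Lambda)}\Big| \ll (\log Y)^{q_E} \max_{0\leq k<n}\frac{Y^k}{\lambda_1(\Lambda)\cdots\lambda_k(\Lambda)},
\]
where $\lambda_i(\Lambda)$ are the successive minima of $\Lambda$.

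The main obstacle is to keep the constants uniform in $E$. This hinges on the lower bound $\lambda_1(\Lambda)\gg_n 1$, which follows from AM--GM: every nonzero $b\in\OO_E$ has $|{\rm Nm}(b)|\geq 1$, whence $\sum_\tau n_\tau |\tau b|^2 \geq n$, and so $\|\iota(b)\|\gg_n 1$. Combined with the trivial $\lambda_k\geq\lambda_1$, this bounds the error uniformly by $Y^{n-1}(\log Y)^{q_E}$ with a constant depending only on $n$; dividing $\mathrm{vol}(R_Y)$ by $\mathrm{covol}(\Lambda)\asymp\sqrt{|\Delta_E|}$ produces the stated main term.
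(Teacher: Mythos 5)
The paper's proof of this lemma is a one-line appeal to \cite[Theorem~1.1]{widmer2016integral}, which already gives an asymptotic for $|S_E(1;Y)|$ with an error term uniform in $E$; the paper merely identifies the parameters and bounds $B_E\ll|\Delta_E|^{-1/2}$. You instead attempt to re-derive this special case from scratch via Minkowski embedding and a Lipschitz lattice-point count. The reduction to $|\Lambda\cap R_Y|$, the volume computation $\mathrm{vol}(R_Y)\ll Y^n(\log Y)^{q_E}$, and the key uniformity input $\lambda_1(\Lambda)\gg_n 1$ from $|\mathrm{Nm}(b)|\geq 1$ and AM--GM are all correct and are genuinely the right ideas; the last point in particular is what lets the implied constant depend only on $n$.

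The gap is in the decomposition step. You assert a partition of $R_Y$ into $O((\log Y)^{q_E})$ cells, each with boundary a Lipschitz image of $[0,1]^{n-1}$ of constant $\ll Y$. But $R_Y=\{x:\prod_\tau\max(1,|x_\tau|)^{n_\tau}\leq Y^n\}$ has cusps where a single real coordinate runs up to $Y^n$, and the $\OO_E^\times$-action on Minkowski space is multiplicative and does \emph{not} preserve $R_Y$ (the Weil height is not unit-invariant), so indexing cells by unit-lattice translates of a fundamental domain does not confine them to size $\ll Y$. A cell deep in a cusp needs Lipschitz constant as large as $Y^{\Theta(n)}$, which, fed into the lattice-point error estimate, would give a per-cell error far exceeding $Y^{n-1}$. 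Salvaging this requires truncating the cusps and bounding the cuspidal contribution by a separate argument (or an anisotropic Lipschitz parametrization with careful bookkeeping of exponents) — precisely the technical work hidden in Widmer's theorem, and not supplied in your sketch. Since you are in any case invoking a lattice-counting theorem of Widmer's, the cleaner route — and the one the paper takes — is to cite \cite[Theorem~1.1]{widmer2016integral} directly.
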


\begin{proof}
    Let $r$, $2s$ be the number of real and complex embeddings of $E$, respectively. So $n=r+2s$ and $q_E=r+s-1\leq n$, by Dirichlet's unit theorem.  
    We apply  \cite[Theorem~1.1]{widmer2016integral} to the number field $E$ and with $n=e=1$ in \emph{loc.\ cit.} notation.  (Note that our $n$ is $m$ in \emph{loc.\ cit.}, and that $\OO_E(1,1)=\OO_E$, $N(\OO_E(1,1),Y)= S_E(1;Y)$,
    $t = q_E$, 
    $D_i = B_{E} \binom{t}{i}\frac{1}{i!}$, and 
    $B_E = \frac{2^r(2\pi)^s}{\sqrt{\Delta_E}}$.) Then,  Theorem~1.1 in 
    \emph{loc.\ cit.} gives
    \[
        |S_E(1;Y)|= B_{E} Y^n \sum_{i=0}^{t} \binom{t}{i} \frac{1}{i!} (\log Y^n)^i + O (Y^{n-1} (\log Y)^t),
    \]
    where the implied constant depends only on $n$.
    As $B_E \ll \frac{1}{\sqrt{\Delta_E}}$ and $\sum_{i=0}^t \binom{t}{i} \frac{1}{i!}\ll 1$, we get that,
    \[
        |S_E(1;Y) |\ll \frac{Y^{n}(\log Y)^t}{\sqrt{\Delta_E}} + Y^{n-1} (\log Y)^t,
    \]
    as needed.
\end{proof}

\subsection{Technical lemma}\label{sec:UpperBounds_2}
We will derive the theorems of the paper from the following lemma. 
\begin{lemma}\label{lem:mathcalF_bounds}
    Let  $k,n_1,n_2$ be positive integers, let $\Upsilon\ll L^{\frac{1}{n_1}}$, and put 
    \begin{align*}
        \mathcal{F}=\mathcal{F}_{k,n_1}(L) &= \bigg\{ E/\QQ : [E:\QQ]=n_1,\ E'\subsetneq E \Rightarrow [E':\QQ]\leq k, \\ &\qquad\qquad \mbox{ and}\quad  \exists b\in S_E(1;\Upsilon) \mbox{ such that }
        E=\QQ(b)\bigg\}.
    \end{align*}
    For $t\geq \frac{k}{n_1}$, we have 
    \[
        \sum_{E\in\mathcal{F}} N_E^{n_2} \ll L^{\nu_t+\epsilon},
    \]
    where $N_E = |S_E(1;\Upsilon)|$ and  $\nu_t = \max\{\frac12+\frac{1}{n_1} + n_2, n_1+tn_2, n_1+(1-\frac{1}{n_1})(n_2-1)\}$.
\end{lemma}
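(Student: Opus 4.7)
The plan is to separate, for each $E\in\mathcal{F}$, the elements of $S_E(1;\Upsilon)$ that generate $E$ from those lying in a proper subfield. Setting $M_E:=\#\{b\in\OO_E:\QQ(b)=E,\ H(b)\leq\Upsilon\}$, every non-generator lies in some proper subfield $F\subsetneq E$, and by hypothesis $[F:\QQ]\leq k$. Applying Lemma~\ref{lem_barroero_bounded_integers_in_given_field} to $F$ with the trivial lower bound $|\Delta_F|\geq 1$ gives $|S_F(1;\Upsilon)|\ll \Upsilon^{[F:\QQ]}(\log\Upsilon)^{q_F}\ll L^{k/n_1+\epsilon}$. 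Summing over the subfields of $E$ (whose number is bounded in terms of $n_1$), this produces
\[
    N_E\leq M_E+O\bigl(L^{k/n_1+\epsilon}\bigr)\leq M_E+O\bigl(L^{t+\epsilon}\bigr),
\]
using $t\geq k/n_1$, and hence $N_E^{n_2}\ll M_E^{n_2}+L^{tn_2+\epsilon}$.

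Next I would bound $|\mathcal{F}|$ and $\sum_E M_E$ by the polynomial count. Any $b\in S_E(1;\Upsilon)$ generating $E$ has a monic integer minimal polynomial of degree $n_1$ and height $\ll H(b)^{n_1}\ll L$ by \eqref{naiveHvsWh}; there are $\ll L^{n_1}$ such polynomials, each determining the isomorphism class of $E$ and giving rise to at most $n_1$ generators of $E$ (its roots viewed inside $E$), whence $|\mathcal{F}|\ll L^{n_1}$ and $\sum_E M_E\ll L^{n_1}$. Combined with the previous step,
\[
    \sum_{E\in\mathcal{F}} N_E^{n_2}\ll \sum_{E\in\mathcal{F}} M_E^{n_2}+L^{n_1+tn_2+\epsilon},
\]
already accounting for the middle exponent of $\nu_t$.

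To control $\sum_E M_E^{n_2}$ I would split according to the size of $M_E$ at the threshold $X_0:=L^{1-1/n_1+\epsilon}$. On the small side, bounding $M_E^{n_2-1}$ by $X_0^{n_2-1}$ yields
\[
    \sum_{M_E\leq X_0} M_E^{n_2}\leq X_0^{n_2-1}\sum_E M_E\ll L^{n_1+(1-1/n_1)(n_2-1)+\epsilon},
\]
the third exponent of $\nu_t$. On the large side, combining $M_E\leq N_E$ with the Barroero estimate $N_E\ll L^{1+\epsilon}/\sqrt{|\Delta_E|}+L^{1-1/n_1+\epsilon}$ forces the first term there to dominate, giving $|\Delta_E|\ll L^{2/n_1}$. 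Schmidt's theorem that the number of degree-$n_1$ fields of absolute discriminant at most $X$ is $\ll X^{(n_1+2)/4}$ then yields $\#\{E:|\Delta_E|\ll L^{2/n_1}\}\ll L^{1/2+1/n_1}$, and the crude estimate $M_E\leq N_E\ll L^{1+\epsilon}$ on these fields produces
\[
    \sum_{M_E>X_0} M_E^{n_2}\ll L^{1/2+1/n_1+n_2+\epsilon},
\]
the first exponent of $\nu_t$.

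The main obstacle is to avoid the wasteful bound $\sum M_E^{n_2}\leq (\max M_E)^{n_2-1}\sum M_E\ll L^{n_1+n_2-1+\epsilon}$, which misses the target third exponent by a factor $L^{(n_2-1)/n_1}$. The splitting by $M_E$ at $X_0$ recovers this loss on the main part, and the exceptional part is controlled by the observation that large $M_E$ forces small $|\Delta_E|$, coupled with Schmidt's bound on fields of bounded discriminant.
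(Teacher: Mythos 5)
Your proof is correct. It reaches the same three exponents in $\nu_t$ and rests on the same core inputs (subfields have degree $\leq k$, Schmidt's count of fields of bounded discriminant, and Widmer/Barroero's count of integers of bounded height in a fixed field), but it organizes the argument differently. The paper partitions $\mathcal{F}$ into $\mathcal{F}_1$ (small $|\Delta_E|$), $\mathcal{F}_2$ (small $N_E$), and $\mathcal{F}_3$ (the remainder), and then handles $\mathcal{F}_3$ via a standalone combinatorial lemma (Lemma~\ref{lem_covers_bound}) about covers of a set by subsets with small pairwise intersections; the proof of that lemma works by passing from a cover to a partition. You instead pass to the natural partition directly: you single out the set of \emph{generators} $M_E$, whose union over $E$ is automatically disjoint, absorb the non-generators into the error $O(L^{t+\epsilon})$ (exactly the role of the intersection bound $R$ in the paper's lemma), and then bound $\sum M_E^{n_2}$ by a threshold split at $X_0 = L^{1-1/n_1+\epsilon}$: below the threshold you use $M_E^{n_2}\leq X_0^{n_2-1}M_E$ together with $\sum_E M_E\ll L^{n_1}$, and above it you invert the Widmer estimate to deduce $|\Delta_E|\ll L^{2/n_1}$ and apply Schmidt. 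This is logically equivalent to the paper's $\mathcal{F}_1$/$\mathcal{F}_3$ dichotomy but avoids the auxiliary cover/partition machinery, making the argument more self-contained; the trade-off is that the paper's combinatorial lemma is stated once and could be reused, whereas your inline argument is specific to this situation. One small point worth making explicit in a write-up: the step "if $N_E>X_0$ then $|\Delta_E|\ll L^{2/n_1}$" requires taking the $\epsilon$ in $X_0$ strictly larger than the $\epsilon$-losses in Lemma~\ref{lem_barroero_bounded_integers_in_given_field}, so that the term $L^{1-1/n_1+\epsilon'}$ there cannot by itself account for $N_E>X_0$; this is a standard $\epsilon$-bookkeeping issue and your argument handles it correctly in spirit.
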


\begin{proof}
    We break the sum into three parts, and handle each by a different method. Let
    \begin{align*}
        & \mathcal{F}_1 := \{E\in \mathcal{F}:|\Delta_E|\leq L^{\frac{2}{n_1}}\}, \\
        & \mathcal{F}_2 := \{E\in \mathcal{F}:  N_E \leq L^{t+\epsilon}\}, \\
        & \mathcal{F}_3 := \{E\in \mathcal{F}: |\Delta_E|>  L^{\frac{2}{n_1}} \quad \mbox{and}\quad N_E > L^{t+\epsilon}\}.
    \end{align*}
    So, $\mathcal{F} = \mathcal{F}_1\cup \mathcal{F}_2\cup \mathcal{F}_3$; hence
    \[
        \sum_{E\in \mathcal{F}}N_E^{n_2} \leq  \sum_{E\in \mathcal{F}_1}N_E^{n_2}+\sum_{E\in \mathcal{F}_2}N_E^{n_2}+\sum_{E\in \mathcal{F}_3}N_E^{n_2} .
    \]
    To finish the proof it suffices to bound each term by $L^{\nu_t+\epsilon}$. 
    
    For the first term, we apply the union bound: By Schmidt\footnote{Today there are improvements on Schmidt, see \cite{bhargava2022improvement,lemke2022upper}.Applying these improvements does not improve our results, although it might improve some of the auxiliary bounds.} \cite[Equation (1.1)]{schmidt1995number}, $|\mathcal{F}_1|\ll L^{\frac{2}{n_1}\frac{n_1+2}{4}}=L^{\frac{1}{2}+\frac{1}{n_1}}$. By Lemma~\ref{lem_barroero_bounded_integers_in_given_field}, $N_E\ll \Upsilon^{n_1 +\epsilon'} \ll L^{1+\epsilon'}$. Hence,
    \[
        \sum_{E\in \mathcal{F}_1} N_E^{n_2} \ll L^{\frac{1}{2}+\frac{1}{n_1}+n_2+\epsilon} \leq  L^{\nu_t+\epsilon},
    \]
    where $\epsilon=\epsilon'n_1$.

    For the second term, we also use the union bound: By Lemma~\ref{lem_schmidt_quantitative_northcott},
    \[
        |\mathcal{F}_2|\leq |\mathcal{F}|\leq |S_{\QQ}(n_1;\Upsilon)|\ll|S_{\QQ}(n_1;L^{\frac{1}{n_1}})|\ll L^{n_1}.
    \]
    By the definition of $\mathcal{F}_2$, 
    \[
        \sum_{E\in \mathcal{F}_2} N_E^{n_2} \leq |\mathcal{F}_2|\cdot L^{tn_2+\epsilon}\ll L^{n_1+tn_2+\epsilon}\leq  L^{\nu_t+\epsilon}.
    \]
    
    Finally, we bound the third term. Let $0<\delta < \epsilon$. Let $E\in \mathcal{F}_3$. For a proper subfield $\QQ\subseteq E'\subsetneq E$, we have $[E':\QQ]\leq k$, so by   Lemma~\ref{lem_barroero_bounded_integers_in_given_field}, we get that  $|S_{E'}(1;\Upsilon)|\ll|S_{E'}(1;L^{\frac{1}{n_1}})|\ll L^{\frac{k}{n_1}+\delta}$. 
    We apply Lemma~\ref{lem_covers_bound} to the cover $T =\{S_E: E\in\mathcal{F}_3\}$ of 
    $S:=\cup_{E\in\mathcal{F}_3}S_E$, where  $S_E:=S_E(1;\Upsilon)$. 
    Note that  $X:=|S| \ll |S_{\QQ}(n_1;\Upsilon)|\ll L^{n_1}$ by Lemma~\ref{lem_schmidt_quantitative_northcott}.   Moreover, $Z\leq |S_E|\leq Y$, where by  Lemma~\ref{lem_barroero_bounded_integers_in_given_field}, $Y\ll  L^{1-\frac{1}{n_1}+\epsilon}$  and, by the definition of $\mathcal{F}_3$,    $Z=L^{t+\epsilon}$ . Finally, we have
    \[
        S_E\cap(\cup_{E\neq F\in \mathcal{F}_3}S_F)\subset \cup_{E'\subsetneq E}S_{E'}.
    \]
    Hence,  $|S_E\cap(\cup_{E\neq F\in \mathcal{F}_3}S_F)|\ll R=L^{t+\delta}=o(Z)$. Thus, 
    \[
        M_{n_2}(T)=\sum_{E\in \mathcal{F}_3} N_E^{n_2} \ll XY^{n_2-1} = L^{n_1+(1-\frac{1}{n_1})(n_2-1)+\epsilon} \leq L^{\nu_t+\epsilon},
    \]
    as required.
\end{proof}

\subsection{Proof of upper bounds}
Let $n=n_1 n_2$, $n_1,n_2>1$. If $n_2=2$ and $n\geq 10$ put $m = \frac{n}{2}-\frac{2}{n}+1$. Otherwise, put
\[
    m = \max\bigg\{n_2+\frac{1}{2}+\frac{1}{n_1},n_1+(1-\frac{1}{n_1})(n_2-1),n_1+\frac{n_2}{2}\bigg\}.
\]
Denote by $\Imp(n_1,n_2;L)$ the set of monic irreducible polynomials with integer coefficients of height at most $L$ with transitive $(n_1,n_2)$-imprimitive Galois group. To prove Theorem~\ref{thm:imprimitive}, it suffices to prove that  $|\Imp(n_1,n_2;L)| \ll L^{m+\epsilon}$. 

Recall that a transitive group $G$ is $(n_1,n_2)$-imprimitive if and only if the stabilizer contains a subgroup of index $n_1$ \cite[Theorem 7.5]{wielandt2014finite}. By the Galois correspondence, this implies that if $f$ is an irreducible polynomial of degree $n=n_1n_2$, and $\alpha$ is its root, then $G_f$ is $(n_1,n_2)$-imprimitive if and only if  there exists a subfield $\QQ\subset E\subset \QQ(\alpha)$ of degree $n_1$ over $\QQ$. 

In this case, let $g\in E[x]$ be the minimal polynomial of $\alpha$ over $E$.   Then the set $\{\sigma g : \sigma\in {\rm Emb}(E, \mathbb{C})\}$ consists of pairwise coprime polynomials, and
\begin{equation}\label{imprimitive_product}
    f = \prod_{\sigma\in {\rm Emb}(E, \mathbb{C})} \sigma g.  
\end{equation}
(Here we used that the RHS is a monic polynomial with rational coefficients of degree equal to $\deg f$ and with $\alpha$ as a zero.)

\begin{lemma}\label{lem_pairs}
    There exists a constant $C>0$, depending only on $n$, such that for every $f\in \Imp(n_1,n_2;L)$ there exists a pair $(E,g)$, where $E$ is a number field of degree $n_1$ generated by some $b\in S_E(1;CL^{\frac{1}{n_1}})$, and $g= X^{n_2}+\sum_{i=0}^{n_2-1} b_i X^i\in E[X]$ divides $f$, and satisfies $H(b_i)\ll CL^{\frac{1}{n_1}}$. Moreover, the number of such pairs $(E,g)$ is $\ll 1$.
\end{lemma}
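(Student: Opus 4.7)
The plan is to build a natural pair $(E, g)$ from the imprimitivity structure of $G_f$, control the heights using Lemmas~\ref{lem_coefficient_height} and~\ref{lem_generatorsmallheight}, and bound the number of valid pairs by enumerating roots and subfields.

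\emph{Existence.} I would fix a root $\alpha$ of $f$. The transitive $(n_1, n_2)$-imprimitivity of $G_f$ yields, via the Galois correspondence recalled just before the lemma, a subfield $\QQ \subset E \subset \QQ(\alpha)$ of degree $n_1$. Take $g = X^{n_2} + \sum_{i=0}^{n_2-1} b_i X^i \in E[X]$ to be the minimal polynomial of $\alpha$ over $E$: it is monic of degree $n_2 = [\QQ(\alpha):E]$, divides $f$, and has coefficients $b_i \in \OO_E$ (since $\alpha$ is an algebraic integer as a root of a monic integral polynomial). The factorization \eqref{imprimitive_product} gives $f = \prod_\sigma \sigma g$, so Lemma~\ref{lem_coefficient_height} yields $H(b_i) \ll H(f)^{1/n_1} \leq L^{1/n_1}$. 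A short argument shows the $b_i$ generate $E/\QQ$: if $E' := \QQ(b_0, \ldots, b_{n_2-1}) \subseteq E$, then $g \in E'[X]$ forces $[\QQ(\alpha):E'] \leq n_2 = [\QQ(\alpha):E]$, and combined with $E' \subseteq E$ this gives $E' = E$. Lemma~\ref{lem_generatorsmallheight} then produces $b \in \OO_E$ generating $E/\QQ$ with $H(b) \ll L^{1/n_1}$; for $C = C(n)$ sufficiently large, $b \in S_E(1; CL^{1/n_1})$.

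\emph{Counting.} Each pair produced by the construction above is determined by the choice of a root $\alpha$ of $f$ (at most $n$ choices) together with a degree-$n_1$ subfield $E \subset \QQ(\alpha)$, of which there are $\ll_n 1$, since they correspond to subgroups of index $n_1$ containing $\Gal(L/\QQ(\alpha))$ in the Galois group of the Galois closure $L/\QQ$ of $\QQ(\alpha)/\QQ$, a finite group of order at most $n!$. For the reverse inclusion---that every valid pair arises this way---I would apply the generation argument from the existence step to any valid pair $(E, g)$ and any root $\alpha$ of $g$, concluding that the coefficients of $g$ already generate $E$, that $E \subseteq \QQ(\alpha)$, and thus $(E, g)$ fits into the construction.

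The main obstacle is this last inclusion: ensuring that for any valid pair $(E, g)$, the coefficients of $g$ generate a subfield of $\bar\QQ$ of degree exactly $n_1$ contained in $\QQ(\alpha)$ for some root $\alpha$ of $g$. This rests on the irreducibility of $f$ together with \eqref{imprimitive_product}: the Galois orbit of $g$ under $\Gal(\bar\QQ/\QQ)$ must have size $n_1$, forcing $[\QQ(b_0, \ldots, b_{n_2-1}):\QQ] = n_1$, from which the containment in $\QQ(\alpha)$ follows by the degree computation $[\QQ(\alpha):\QQ(b_0, \ldots, b_{n_2-1})] = n_2$.
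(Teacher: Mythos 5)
Your proof is correct and follows essentially the same route as the paper: take $E$ from the imprimitivity structure, take $g$ to be the degree-$n_2$ factor of $f$ over $E$ coming from \eqref{imprimitive_product}, show the coefficients of $g$ generate $E$, and then apply Lemmas~\ref{lem_coefficient_height} and~\ref{lem_generatorsmallheight}. There are two small points of divergence, neither of which changes the substance. For the generation of $E$ by $b_0,\ldots,b_{n_2-1}$, you run a degree-tower argument ($[E':\QQ]\geq n_1$ forced by $g\in E'[X]$, $g(\alpha)=0$, and $[\QQ(\alpha):\QQ]=n$), whereas the paper observes that the $\sigma g$ are pairwise coprime, so the Galois orbit of the coefficient tuple has exactly $n_1$ elements. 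For the finiteness of the set of pairs, you classify them via (root $\alpha$, degree-$n_1$ subfield of $\QQ(\alpha)$) and then check the reverse inclusion; the paper is terser, simply noting that $f$ has $\ll 1$ monic degree-$n_2$ divisors and the splitting field of $f$ has $\ll 1$ subfields. Both counts rest on the same fact you identify at the end, namely that $E$ must equal $\QQ(b_0,\ldots,b_{n_2-1})$, so it sits inside the splitting field of $f$ and there are boundedly many candidates.
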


\begin{proof}
    Since $f$ has $\ll 1$ factors, and the splitting field of $f$ has $\ll 1$ subfields, there are $\ll 1$ such pairs $(E,g)$. 
    
    We take $E$ and $g$ as in \eqref{imprimitive_product}. Since $\{\sigma g : \sigma\in {\rm Emb}(E, \mathbb{C})\}$ are coprime, the orbit of $(b_0,\ldots, b_{n_2-1})$ is of length $n_1$, hence this tuple generates $E/\QQ$. So we may apply Lemma~\ref{lem_coefficient_height} to get that $H(b_i)\ll L^{\frac{1}{n_1}}$, and Lemma~\ref{lem_generatorsmallheight} to obtain a generator $b\in\OO_E$ of $E/\QQ$ such that $H(b)\ll \max(H(b_i))\ll L^{\frac{1}{n_1}}$.
\end{proof}

\subsubsection{Proof of the upper bound in Theorem~\ref{thm:C_2WrS_n}}
The action of $G:=(C_2\wr S_{\frac{n}{2}})\cap A_n$ on the $\frac{n}{2}$ blocks of size $2$ induces a surjective map $G\to S_{\frac{n}{2}}$; in particular, it is primitive. So, the proof follows from the following  more general statement (note that $\nu(\frac{n}{2},2) = \frac{n}{2}+1-\frac2n$ for $n\geq 6$):

\begin{proposition}\label{prop:Primitive}
    Let $G$ be a transitive $(n_1,n_2)$-imprimitive group such that the induced action on the $n_1$ blocks of imprimitivity is primitive. Then 
    \[
        \Prob(G_f=G) \ll L^{-n+\nu+\epsilon},
    \]  
    with $\nu = \nu(n_1,n_2) = \max\{\frac12+\frac{1}{n_1} + n_2, n_1+\frac{n_2}{n_1}, n_1+(1-\frac{1}{n_1})(n_2-1)\}$.
\end{proposition}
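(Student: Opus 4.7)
The plan is to count the polynomials $f\in \Imp(n_1,n_2;L)$ with $G_f=G$ by parameterizing them via the pair $(E,g)$ supplied by Lemma~\ref{lem_pairs}, and then applying the technical estimate of Lemma~\ref{lem:mathcalF_bounds} with the smallest admissible parameter $k=1$.

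First, I would translate the hypothesis that the induced action of $G$ on the $n_1$ blocks is primitive into a structural statement about the intermediate field $E$. If $\alpha$ is a root of $f$ and $\QQ\subset E\subset \QQ(\alpha)$ is the degree-$n_1$ intermediate field whose existence follows from the $(n_1,n_2)$-imprimitivity of $G$, then, via Galois correspondence, the subgroup $H\leq G$ fixing $E$ has index $n_1$ and coincides with the stabilizer of a block. Primitivity of the block action means $H$ is a \emph{maximal} subgroup of $G$, so $E/\QQ$ admits no proper nontrivial intermediate field. Consequently, every $E$ arising in our count belongs to the family $\mathcal{F}_{k,n_1}(L)$ of Lemma~\ref{lem:mathcalF_bounds} with $k=1$.

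Next, Lemma~\ref{lem_pairs} shows that each $f$ determines only $\ll 1$ such pairs $(E,g)$, and conversely that $(E,g)$ determines $f$ via $f=\prod_{\sigma}\sigma g$. The $n_2$ coefficients $b_0,\ldots,b_{n_2-1}$ of $g$ each lie in $S_E(1;\Upsilon)$ with $\Upsilon\ll L^{1/n_1}$. Hence, the total count is bounded by
\[
    |\{f\in \Imp(n_1,n_2;L): G_f=G\}|\ll \sum_{E\in \mathcal{F}_{1,n_1}(L)} N_E^{n_2},
\]
where $N_E=|S_E(1;\Upsilon)|$, matching the setup of the technical lemma.

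Finally, I would invoke Lemma~\ref{lem:mathcalF_bounds} with $k=1$ and the smallest permissible value $t=1/n_1$. A direct substitution yields $\nu_t=\nu(n_1,n_2)$, so $\sum_E N_E^{n_2}\ll L^{\nu+\epsilon}$. Dividing by the $\asymp L^n$ monic integral polynomials of height $\leq L$ gives the desired bound $\Prob(G_f=G)\ll L^{-n+\nu+\epsilon}$. The crucial conceptual step is the reduction to $k=1$: without the primitivity hypothesis one would only obtain $k\leq n_1/p$ for the smallest prime divisor $p$ of $n_1$, which would force a larger value of $t$ and weaken the middle term $n_1+tn_2$ in $\nu_t$. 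Otherwise, the bounds in Lemma~\ref{lem:mathcalF_bouns} are engineered precisely to collapse into $\nu(n_1,n_2)$ in the primitive case, so the remainder of the argument is bookkeeping.
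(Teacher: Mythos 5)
Your proposal is correct and follows essentially the same route as the paper: you identify the degree-$n_1$ intermediate field $E$ via the block stabilizer, observe that primitivity of the block action makes $E/\QQ$ a minimal extension so that $E \in \mathcal{F}_{1,n_1}(L)$, bound the count by $\sum_E N_E^{n_2}$ using Lemma~\ref{lem_pairs} and Lemma~\ref{lem_coefficient_height}, and then apply Lemma~\ref{lem:mathcalF_bounds} with $k=1$ and $t=1/n_1$. Your closing observation that the primitivity hypothesis is exactly what licenses $k=1$ (versus $k\leq n_1/p$ for $p$ the smallest prime divisor of $n_1$ in general) is accurate and even slightly finer than the crude $k=\lfloor n_1/2\rfloor$ the paper uses in the general case; note also the typo in your final cross-reference, which should read \texttt{lem:mathcalF\_bounds}.
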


\begin{proof}
    We need to bound the number $N$ of monic $f\in\ZZ[X]$ of degree $n$ having Galois group $G_f=G$ and height $H(f)\leq L$. For such an $f$, let $\alpha$ be a root of $f$, let $\alpha\in \Lambda$ be a block of size $n_2$, and let $E$ be the field fixed by the stabilizer of $\Lambda$. Then $\QQ\subseteq E\subseteq \QQ(\alpha)$, $[\QQ(\alpha):\QQ]=n$ and  $[E:\QQ]=n_1$. Moreoever, the assumption that $G$ acts primitively on the set $\{g\Lambda : g\in G\}$ implies that $E/\QQ$ is minimal. 
    
    Let $C>0$ be the constant of Lemma~\ref{lem_pairs}, and $\Upsilon = CL^{\frac{1}{n_1}}$. We want to apply Lemma~\ref{lem:mathcalF_bounds} with $k=1$ and $t=\frac{1}{n_1}$. For this, we note that 
    by Lemma~\ref{lem_coefficient_height} and the above paragraph, $N$ is bounded by the  number of pairs $(E,g)$, where $E\in \mathcal{F}_{1,n_1}(L)$ and $g\in E[X]$ is a monic polynomial with coefficients in $S_E(1;\Upsilon)$.
    Since there are $N_E:=|S_E(1;\Upsilon)|$ options for each coefficient of $g$, we get the bound 
    \[
        N\leq \sum_{E\in \mathcal{F}_{1,n_1}(L)} N_E^{n_2} \ll L^{\nu+\epsilon},
    \]
    as needed. 
\end{proof}

\subsubsection{Proof of Theorem \ref{thm:imprimitive}}
Let $C>0$ be the constant of Lemma~\ref{lem_pairs}, $\Upsilon = CL^{\frac{1}{n_1}}$, and $k=\lfloor\frac{n_1}{2}\rfloor$. By Lemma~\ref{lem_coefficient_height}, $|\Imp(n_1,n_2;L)|$ is bounded by the  number of pairs $(E,g)$, where $E\in \mathcal{F}_{k,n_1}(L)$, and $g\in E[X]$ is a monic polynomial with coefficients in $S_E(1;\Upsilon)$. Since there are $N_E:=|S_E(\Upsilon)|$ options for each coefficient of $g$, we get the bound 
\[
    |\Imp(n_1,n_2;L)| \leq \sum_{E\in \mathcal{F}_{k,n_1}(L)} N_E^{n_2}.
\]
Thus by Lemma~\ref{lem:mathcalF_bounds} applied with $k=\lfloor \frac{n_1}{2}\rfloor$ and $t=\frac{1}{2}$, we get that 
\[
    \Prob(G_f \mbox{ is transitive $(n_1,n_2)$-imprimitive}) \ll L^{-n+\nu +\epsilon}
\]
with $\nu = \max\{\frac12+\frac{1}{n_1} + n_2, n_1+\frac{n_2}{2}, n_1+(1-\frac{1}{n_1})(n_2-1)\}$. This is the asserted bound if $n_2>2$ or $n<10$. 

To this end, assume that $n_2=2$ and $n\geq 10$. We condition on the largest $k<n_1$ such that $E$ contains a subfield $E'$ with $[E':\QQ]=k$. 
If $k=1$, that is, the action of $G_f$ on the $n_1$-blocks is primitive, then by Proposition~\ref{prop:Primitive},  
\[
    \Prob(G_f \mbox{ is transitive $(n_1,n_2)$-imprimitive} \mid k=1) \ll L^{-\frac n2+1-\frac2n+\epsilon}.
\]
Next, we assume that $k=2$. By Lemma~\ref{lem:mathcalF_bounds} and the bound of the probability in terms of sums of $N_E^{n_2}$, 
\begin{align*}
    \Prob(G_f \mbox{ is transitive $(n_1,n_2)$-imprimitive} \mid k=2) &\ll \frac{1}{L^n}\sum_{E\in \mathcal{F}_{k,n/2}} N_E^{n_2} \\ &\ll L^{-\frac n2+1-\frac2n+\epsilon}.
\end{align*}
(Here we used that $n\geq 10$.)
Finally, if $k>2$, then $G_f$ is $(k,\frac{n}{k})$-imprimitive, and we apply the first paragraph, and note that the corresponding $\nu$ satisfies
\[
\nu = \max\{\frac12+\frac{1}{k} + \frac nk, k+\frac{n}{2k}, k+(1-\frac{1}{k})(\frac{n}{k}-1)\}\leq \frac n2+1-\frac2n.
\]

\qed

\bibliographystyle{plain}


\end{document}